





\documentclass[12pt,a4]{amsart}
\usepackage{psfrag}
\usepackage{graphicx}
 \usepackage{righttag}
\usepackage{amsmath}
\usepackage{enumerate}
\usepackage{amssymb}
\usepackage{amsbsy}
\usepackage{amsfonts}
\textwidth 5.5truein

\textheight 8truein \hoffset -.74in \voffset -.59in \footskip 36pt

\theoremstyle{plain}
\newtheorem{thm}{Theorem}[section]
\newtheorem{prop}[thm]{Proposition}
\newtheorem{lem}[thm]{Lemma}
\newtheorem{cor}[thm]{Corollary}

\theoremstyle{definition}

\newtheorem{rmk}[thm]{Remark}
\numberwithin{equation}{section}
\newcommand{\sm}{\left(\begin{smallmatrix}}
\newcommand{\esm}{\end{smallmatrix}\right)}

\newfont{\FieldFont}{msbm10 scaled\magstep1}

\begin{document}
\title[Functional equations for double series]
{Functional equations for double series of Euler type with coefficients}

\author{YoungJu Choie }

 \address{Department of Mathematics and PMI\\
 Pohang University of Science and Technology\\
 Pohang, 790--784, Korea}
 \email{yjc@postech.ac.kr}

 \author{Kohji Matsumoto}

 \address{Graduate School of Mathematics  \\
 Nagoya University \\
 Chikusa-ku, Nagoya 464-8602, Japan}
  \email{kohjimat@math.nagoya-u.ac.jp}

 \thanks{Keywords: iterated Integrals of modular forms, modular symbol,
double zeta function, functional equation,
 confluent hypergeometric function, modular relation}
 \thanks{1991
 Mathematics Subject Classification:11F68, 11F32, 11M32}
 \thanks{This work
 was partially supported by NRF 2012047640, NRF 2011-0008928 and  NRF 2008-0061325
}

\begin{abstract}
We prove two types of functional equations for double series of Euler type
with complex coefficients.   The first one is a generalization of
the functional equation for the Euler double zeta-function, proved in a former
work of the second-named author.
The second one is more specific, which is proved when the coefficients are Fourier
coefficients of cusp forms and the modular relation is essentially used in the
course of the proof. As a consequence of functional equation we are able to determine trivial zero divisors.
\end{abstract}
 \maketitle

\section{\bf{Introduction}}\label{sec-0}

Inspired by two source, namely,
the theory of multiple zeta values on the one hand,
and the theory of modular symbols
and  periods of cusp forms on the other,
Manin in \cite{M-iterate},
\cite{M-S} extended the theory of periods of modular forms
replacing integration along geodesics in
the complex upper half plane
by iterated integrations to set up the foundation of  
the theory of "iterated
noncommutative modular symbols". 
In particular
Manin\cite{M-iterate, M-S} 
considered the following iterated Mellin transform  
$$I_{i\infty}^{0}(\omega_{s_{\ell}},..,\omega_{s_{1}}):
=\int_{i\infty}^0 \omega_{s_{\ell}}( \tau_{\ell})\int_{i\infty}^{\tau_{\ell}}
\omega_{s_{\ell-1}}(\tau_{\ell-1})..\int_{i\infty}^{\tau_{2}}\omega_{s_{1}}(\tau_{1})$$
of a
finite sequence of cusp forms $f_1,...,f_{\ell}$ 
of weight $k_j \in \mathbb{N}$ with respect
to a congruence subgroup $\Gamma$ of $ SL_2(\mathbb{Z})$ and  $\omega_{s_j}(\tau):=f_j(\tau)\tau^{s_j-1}d\tau ,  s_j\in \mathbb{C}, j=1,...,\ell.$
When $\ell=1$ then
$$I_{i\infty}^0(\omega_{s})
=\int_{i\infty}^0 f(\tau)\tau^{s-1} d\tau$$ is
the classical Mellin transform of
a cusp form $f\in S_k(\Gamma)$ satisfying
the following functional equation:

$$ I_{i\infty}^0(\omega_{s}) = 
-\epsilon_f e^{\pi i s} 
N^{\frac{k}{2}-s}I_{i\infty}^{0}(\omega_{ k-s})$$
if $f$ is an eigenform with eigenvalue $\epsilon_f = \pm 1$
with respect to the involution
$\omega_N=\sm 0 & -1 \\ N & 0 \esm$ (see \cite{Miy}).

However it seems that it is not 
anymore true to expect a simple 
functional equation if $\ell \geq 2.$
Manin \cite{M-S} said
"since a neat functional equation can be
written not for these individual integrals but
for their generating series..," so   the
 functional equation of the "total Mellin transform" associated to the finite family 
  $\{f_j|j=1,..,\ell,..\}$ of cusp forms
was derived.\\

Now consider the case when $\ell=2 $:
for $ f_j(\tau)=\sum_{n\geq 1}a_j(n)e^{2\pi i n \tau},j=1,2$, we have

$$I_{i\infty}^{0}(\omega_{s_{2}}, \omega_{s_{1}})=\int_{i\infty}^{0}
f_2(\tau_2){\tau_2}^{s_2-1}
\int_{i\infty}^{\tau_2}f_1(\tau_1) {\tau_1}^{s_1-1}d\tau_1 d\tau_2$$
$$=\int_{i\infty}^{0}
f_2(\tau_2){\tau_2}^{s_2-1}d\tau_2
\int_{i\infty}^{0}f_1(\tau_1+\tau_2) (\tau_1+\tau_2)^{s_1-1}d\tau_1.
$$
If $s_1\geq 2$ is a  positive integer, then
$I_{i\infty}^{0}(\omega_{s_2}, \omega_{s_1})$ is a finite linear combination
of the following multiple Dirichlet series:
$$
 \sum_{n,m\geq 1} \frac{a_1(n)a_2(m)}{(n+m)^{s_2}m^{r}}, 
 \quad 0\leq r\leq s_1,
 $$
where $s_2\in\mathbb{C}$ whose real part is sufficiently large. 

In this paper we   study more general type of  multiple Dirichlet series,
motivated by the above iterated Mellin transform of Manin.
Take  ${\frak A}=\{a(n)\}_{n\geq 1}$ be a sequence of complex numbers, and define
\begin{equation} \label{A-zeta}
L_2(s_1,s_2;{\frak A})=\sum_{m,n\geq 1} \frac{a(n)}{m^{s_1}(m+n)^{s_2}},
\end{equation}
where $s_j=\sigma_j+it_j$ ($j=1,2$) be two complex variables.   The purpose of
the present paper is to prove two types of functional equations for this double
series.
This is more general situation since we take $a_1(n)$ be an arbitrary complex number
and allow $a_2(n)=1$ for any $n\geq 1.$

Before stating our main results, we recall functional equations for classical
zeta-functions, and for the double zeta-function without coefficients.
It is well-known that the Riemann zeta-function $\zeta(s)$
($s=\sigma+it\in\mathbb{C}$) has the beautiful
symmetric functional equation
\begin{align}\label{0-riemann-fe}
\pi^{-s/2}\Gamma\left(\frac{s}{2}\right)\zeta(s)=
\pi^{-(1-s)/2}\Gamma\left(\frac{1-s}{2}\right)\zeta(1-s).
\end{align}
For Hurwitz zeta-functions $\zeta(s,\alpha)=\sum_{n\geq 0}(n+\alpha)^{-s}$
($\alpha>0$),
however, this symmetricity is no longer valid in general.
In fact, the functional equation for $\zeta(s,\alpha)$ is of the form
\begin{align}\label{0-hurwitz-fe}
\zeta(s,\alpha)=\frac{\Gamma(1-s)}{i(2\pi)^{1-s}}\left\{e^{\pi is/2}
\phi(1-s,\alpha)-e^{-\pi is/2}\phi(1-s,-\alpha)\right\},
\end{align}
where
$\phi(s,\alpha)=\sum_{n\geq 1}e^{2\pi in\alpha}n^{-s}$ is the Lerch
zeta-function (Titchmarsh \cite[(2.17.3)]{Tit}).
See also a recent work of Lagarias and Li \cite{LL}.
Those functional equations are very fundamental in the study of $\zeta(s)$ and
$\zeta(s,\alpha)$.

The theory of multiple zeta-functions has been studied very actively in recent
decades.    Searching for some kind of functional equations is a quite natural
problem in order to develop further analytic studies of multiple zeta-functions.
In the simplest double zeta case, the following functional equation is already
known: consider the case ${\frak{A}_1}:=\{a(n)=1\; {\rm for\;all}\; n\}$
in the definition \eqref{A-zeta}, that is
\begin{equation} \label{orig-zeta}
\zeta_2(s_1,s_2)=\sum_{m,n\geq 1} \frac{1}{m^{s_1}(m+n)^{s_2}}.
\end{equation}
This is called the Euler double zeta-function,
and satisfies the following
functional equation:
\begin{align}\label{0-double-fe}
\zeta_2(s_1,s_2)=\frac{\Gamma(1-s_1)\Gamma(s_1+s_2-1)}{\Gamma(s_2)}\zeta(s_1+s_2-1)\\
+\Gamma(1-s_1)\left\{F_+(1-s_2,1-s_1; \frak{A}_1)+F_-(1-s_2,1-s_1; \frak{A}_1)\right\},\notag
\end{align}
where
\begin{equation}\label{0-Fpm}
F_{\pm}(s_1,s_2; \frak{A}_1)=\sum_{k\geq 1}\sigma_{s_1+s_2-1}(k)\Psi(s_2,s_1+s_2;
\pm 2\pi ik),
\end{equation}
with $\sigma_c(k)=\sum_{0<d|k}d^c$ and
\begin{align}\label{hyp-def}
\Psi(a,b;x)=\frac{1}{\Gamma(a)}\int_0^{e^{i\phi}\infty}e^{-xy}y^{a-1}
    (y+1)^{b-a-1}dy
\end{align}
(the confluent hypergeometric function),
where $\Re a>0$, $-\pi<\phi<\pi$, and $|\phi+\arg x|<\pi/2$.

Formula \eqref{0-double-fe} may be regarded as a double analogue of \eqref{0-hurwitz-fe}.
In fact, since the asymptotic expansion
\begin{align}\label{asymp}
\Psi(a,b;x)=\sum_{j=0}^{M-1}\frac{(-1)^j(a-b+1)_j(a)_j}{j!}x^{-a-j}
    +\rho_M(a,b;x)
\end{align}
(where $(a)_j=\Gamma(a+j)/\Gamma(a)$ and $\rho_M(a,b;x)$ is the remainder term;
see \cite[formula 6.13.1(1)]{Er}) is known, taking the first term of
the right-hand side of \eqref{asymp}, we can ``approximate''
$F_{\pm}(s_1,s_2; \frak{A}_1)$ by the Dirichlet series
$\sum_{k\geq 1}\sigma_{s_1+s_2-1}(k)(\pm 2\pi ik)^{-s_2}$.   Therefore $F_{\pm}(s_1,s_2; \frak{A}_1)$
is a kind of ``generalized Dirichlet series''.

Moreover, \eqref{0-double-fe} gives a symmetric form of functional equation,
similar to \eqref{0-riemann-fe}, on some hyperplanes (see Remark \ref{rmk-symm}
at the end of Section \ref{sec-4}).

Formula \eqref{0-double-fe} was essentially
included in \cite{Mat04}, and first explicitly stated in \cite{Mat-Sugaku}
(in a generalized form with certain shifting parameters).
In \cite{KMT-Debrecen} a generalization of formula \eqref{0-double-fe} to the case
where the denominator includes certain complex parameters.   Moreover in
\cite{KMT-IJNT} a formula analogous to \eqref{0-double-fe} was shown for double
$L$-functions whose numerator includes Dirichlet characters.

The purpose of the present paper is to discuss such kind of
functional equations in a more general setting. As a consequence we are able to 
determine trivial zero divisors of double series.
  We will state the
main results in the next section.

\section{\bf{Statement of results}}\label{sec-0.5}
Our first main result in the present paper is a further generalization of formula
\eqref{0-double-fe}.
We assume that ${\frak A}=\{a(n)\}_{n\geq 1}$
be a sequence of complex numbers
satisfying\\

\begin{enumerate}
\item[(i)]  $a(n)\ll n^{(\kappa-1)/2+\varepsilon}$
with a certain constant $\kappa\geq 1$,
where $\varepsilon$ is an arbitrarily small positive number,

\item[(ii)] the Dirichlet series $L(s,{\frak A})=\sum_{n\geq 1}a(n)n^{-s}$
(which is absolutely convergent for $\Re(s) >(\kappa+1)/2$ by (i))
can be continued to the whole complex plane as a meromorphic function which has only finitely many poles.

\end{enumerate}

\medskip

Let $\mathcal{H}$ be the complex upper half plane and let
\begin{equation}\label{form}
f(\tau)=\sum_{n\geq 1}a(n)q^n,
\end{equation}
where $q=e^{2\pi i \tau}, \tau \in \mathcal{H}$.
It is obvious by (i) that $f(\tau)$ is convergent for $\tau\in\mathcal{H}$
and holomorphic in $\tau$.
Moreover, using (i) we find that the right-hand side of \eqref{A-zeta} is
\begin{align*}
&\ll \sum_{m,n\geq 1}n^{(\kappa-1)/2+\varepsilon}
   m^{-\sigma_1}(m+n)^{-\sigma_2}\\
&\leq \sum_{m,n\geq 1}m^{-\sigma_1}
   (m+n)^{(\kappa-1)/2+\varepsilon-\sigma_2},
\end{align*}
so, using \cite[Theorem 3]{Mat-Millennial}, we see that \eqref{A-zeta} is convergent
absolutely in the region
\begin{align}\label{region1}
\sigma_2>\frac{\kappa+1}{2},\quad
\sigma_1+\sigma_2>\frac{\kappa+3}{2}.
\end{align}
Under ussumption (ii), using \cite{MT} we can show that \eqref{A-zeta} has meromorphic
continuation to the whole complex space $\mathbb{C}^2$.

Let
\begin{align}\label{A-def2}
A_c(l)=\sum_{0<n|l}n^c a(n),
\end{align}
and
\begin{align}\label{def-F2}
F_{\pm}(s_1,s_2;{\frak A})=
\sum_{l\geq 1}A_{s_1+s_2-1}(l)\Psi(s_2,s_1+s_2;\pm 2\pi il).
\end{align}
Also put
\begin{align}\label{def-G}
L_1(s_1,s_2;{\frak A})=
\frac{\Gamma(1-s_1)\Gamma(s_1+s_2-1)}{\Gamma(s_2)}L(s_1+s_2-1;{\frak A}).
\end{align}

\begin{thm}\label{thm-1}
{\rm (}The first form of the functional equation{\rm )}
Under the above assumptions {\rm (i)} and {\rm (ii)},
The functions $F_{\pm}(s_1,s_2;{\frak A})$ can be continued meromorphically to the whole
space $\mathbb{C}^2$, and
for any $s_1,s_2\in\mathbb{C}$, except for singularity points, it holds that
\begin{align}\label{thm-formula}
&L_2(s_1,s_2;{\frak A})=L_1(s_1,s_2;{\frak A})\\
&\quad+\Gamma(1-s_1)\left\{F_+(1-s_2,1-s_1;{\frak A})+F_-(1-s_2,1-s_1;{\frak A})
\right\}.
\notag
\end{align}
\end{thm}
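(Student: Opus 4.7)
\emph{Plan of proof.} The strategy is the Mellin--Barnes contour-shifting method used in \cite{Mat04}, \cite{Mat-Sugaku}, \cite{KMT-Debrecen} to derive \eqref{0-double-fe}, now adapted to accommodate general Dirichlet coefficients $a(n)$.

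Working first in the region $\Re(s_2) > (\kappa+1)/2$, $\Re(s_1+s_2) > (\kappa+3)/2$, I would insert the classical Mellin--Barnes identity
\begin{equation*}
(m+n)^{-s_2} = \frac{1}{2\pi i\,\Gamma(s_2)}\int_{(c)}\Gamma(s_2+z)\Gamma(-z)\,m^{-s_2-z}n^{z}\,dz \qquad (-\Re(s_2) < c < 0)
\end{equation*}
into \eqref{A-zeta}; swapping summations with the integral (justified via (i) and Stirling) then yields, for $c$ in the strip $1-\Re(s_1+s_2) < c < -(\kappa+1)/2$,
\begin{equation*}
L_2(s_1,s_2;\frak{A}) = \frac{1}{2\pi i\,\Gamma(s_2)}\int_{(c)}\Gamma(s_2+z)\Gamma(-z)\zeta(s_1+s_2+z)L(-z;\frak{A})\,dz.
\end{equation*}

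Next, shift the contour leftward across the simple pole of $\zeta(s_1+s_2+z)$ at $z=1-s_1-s_2$, whose residue is exactly $L_1(s_1,s_2;\frak{A})$. On the new contour $\Re z = c'' < -\Re(s_1+s_2)$, apply the functional equation $\zeta(s_1+s_2+z) = 2(2\pi)^{s_1+s_2+z-1}\Gamma(1-s_1-s_2-z)\sin(\pi(s_1+s_2+z)/2)\zeta(1-s_1-s_2-z)$, expand $\zeta(1-s_1-s_2-z)$ and $L(-z;\frak{A})$ as Dirichlet series, and regroup by $N = mn$; the convolution identity $\sum_{mn=N}a(n)m^{s_1+s_2-1} = N^{s_1+s_2-1}A_{1-s_1-s_2}(N)$ (cf.\ \eqref{A-def2}) isolates the coefficient of $N^z$. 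Splitting $\sin(\pi(s_1+s_2+z)/2) = (2i)^{-1}(e^{i\pi(s_1+s_2+z)/2}-e^{-i\pi(s_1+s_2+z)/2})$ and identifying each of the two resulting inner integrals with the Mellin--Barnes representation of $\Psi(1-s_1,2-s_1-s_2;\pm 2\pi iN)$ collapses---after the phases $e^{\pm i\pi(s_1+s_2)/2}\,i^{\pm(1-s_1-s_2)} = \pm i$ and the powers of $2\pi$ cancel---to
\begin{equation*}
\Gamma(1-s_1)\sum_{N\geq 1}A_{1-s_1-s_2}(N)\bigl[\Psi(1-s_1,2-s_1-s_2;2\pi iN) + \Psi(1-s_1,2-s_1-s_2;-2\pi iN)\bigr],
\end{equation*}
which equals $\Gamma(1-s_1)\{F_+(1-s_2,1-s_1;\frak{A})+F_-(1-s_2,1-s_1;\frak{A})\}$ by \eqref{def-F2}, establishing \eqref{thm-formula} in the initial region.

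For the meromorphic continuation of $F_\pm(s_1,s_2;\frak{A})$ to $\mathbb{C}^2$, I would apply the asymptotic expansion \eqref{asymp} with $M$ terms: the principal part produces finitely many series $\sum_l A_{s_1+s_2-1}(l) l^{-s_2-j} = L(1-s_1-j;\frak{A})\zeta(s_2+j)$, each meromorphic on $\mathbb{C}^2$ by (ii), while the remainder $\rho_M$ gives absolute convergence on a region whose left edge recedes as $M\to\infty$. The identity \eqref{thm-formula} then extends by analytic continuation. The key technical difficulty is the bookkeeping in the second step: the phases $e^{\pm i\pi(s_1+s_2)/2}$, $i^{\pm(1-s_1-s_2)}$ and the powers of $2\pi$ must combine so as to eliminate $\Gamma(s_2)^{-1}$ and yield the clean symmetric sum with coefficient exactly $\Gamma(1-s_1)$.
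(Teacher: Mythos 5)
Your argument is correct in outline and arrives at the right identity, but it takes a genuinely different route from the paper. The paper (Sections \ref{sec-1}--\ref{sec-4}) starts from the real double-integral representation of Proposition \ref{prop-2}, splits $1/(e^{2\pi(x+y)}-1)$ into $1/(2\pi(x+y))$ plus the regular part $h(x+y)$ --- the first piece yielding $L_1(s_1,s_2;{\frak A})$ via a beta integral --- and then evaluates the $h$-part over a Hankel contour whose residues at $x=-y+im$ produce the integrals \eqref{hyp-def} directly; the Kummer transformation \eqref{transf} then converts $F^0_\pm$ into $F_\pm(1-s_2,1-s_1;{\frak A})$. You instead split $(m+n)^{-s_2}$ by Mellin--Barnes, obtain $L_1$ as the residue at the pole of $\zeta(s_1+s_2+z)$, and import the functional equation of $\zeta$ together with the Barnes-integral representation of $\Psi$; your phase bookkeeping ($e^{\pm i\pi(s_1+s_2)/2}\, i^{\pm(1-s_1-s_2)}=\pm i$, with the powers of $2\pi$ and the $\Gamma(s_2)$ cancelling) does check out. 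The trade-off: the paper's route never invokes the functional equation of $\zeta(s)$ --- the duality is generated by the partial-fraction structure of $1/(e^{2\pi z}-1)$ --- while yours makes the origin of $L_1$ (a single residue) and of the $\zeta$-factors in \eqref{asymp5} more transparent. Two points in your sketch need care. First, when you shift from $\Re z=c$ to $\Re z=c''<-\Re(s_1+s_2)$ you may also cross the poles of $\Gamma(s_2+z)$ at $z=-s_2-k$ unless you first restrict to $\sigma_1<0$ (the analogue of the paper's region \eqref{region6}); state that restriction, verify the Barnes contour for $\Psi(1-s_1,2-s_1-s_2;\pm 2\pi iN)$ separates the poles of $\Gamma(s_2+z)$ from those of $\Gamma(-z)\Gamma(1-s_1-s_2-z)$, and extend afterwards by analytic continuation. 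Second, the principal-part series in your final paragraph should read $\zeta(s_2+j)L(1-s_1+j;{\frak A})$, not $L(1-s_1-j;{\frak A})$ (compare \eqref{asymp4}--\eqref{asymp5}); the continuation step itself is then identical to the paper's.
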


\begin{rmk}
We can determine the location of singular locus of $L_2(s_1,s_2;{\frak A})$ from the
right-hand side of \eqref{thm-formula}.   In fact, the explicit form of
$L_1(s_1,s_2;{\frak A})$ is given by \eqref{def-G}, while the explicit
information on the singular locus of $F_{\pm}$ can be obtained from
\eqref{asymp5} and \eqref{rho-2} in Section \ref{sec-4}.
\end{rmk}

The proof of Theorem \ref{thm-1}, which will be described in Sections \ref{sec-1}
to \ref{sec-4}, is analogous to that in \cite{Mat04}, the basic idea of
which goes back to Motohashi \cite{Mot} and Katsurada and Matsumoto \cite{KM}.

Since the assumptions (i) and (ii) for ${\frak A}$ is very general, we may discuss
various specific examples.  For instance, by replacing  $\frak{A}$ by  $\frak{A}_1$
in (\ref{thm-1})
we recover the functional equation (\ref{0-double-fe}) of
double zeta function.
Further let us consider the very special situation that
$$
\frak{A}_0(n)=\{a(n)=1  \mbox {\, for only one fixed $n$, and $a(n)=0$ for all other
$n$}\}.
$$
In this case (\ref{thm-1}) is reduced to
\begin{align}\label{specialcase}
L_2(s_1,s_2;{\frak A}_0(n))=\sum_{m\geq 1}\frac{1}{m^{s_1}
    (m+n)^{s_2}},
\end{align}
a single series in two variables.   This is a special case of the series
$$
\xi(s_1,s_2;(\alpha,\beta)):=\sum_{m\geq 0}\frac{1}
    {(m+\alpha)^{s_1}(m+\beta)^{s_2}}\qquad(\beta\geq\alpha>0),
$$
which was used in \cite{Mat03}.   From the above theorem we immediately obtain
the following ``two-variables analogue'' of \eqref{0-hurwitz-fe}.

\begin{cor}\label{cor}
For any $s_1,s_2\in\mathbb{C}$, except for singularity points, it holds that
\begin{align}\label{cor-formula}
L_2(s_1,s_2;{\frak A}_0(n))&
=\frac{\Gamma(1-s_1)\Gamma(s_1+s_2-1)}{\Gamma(s_2)} \cdot \frac{1}{ n^{s_1+s_2-1}}\\
&+\Gamma(1-s_1)\{F_+(1-s_2,1-s_1;\frak{A}_0(n))+
F_-(1-s_2,1-s_1;\frak{A}_0(n))\},\notag
\end{align}
where
$$
F_{\pm}(s_1,s_2;\frak{A}_0(n))=n^{s_1+s_2-1}\sum_{k\geq 1}\Psi(s_2,s_1+s_2;\pm 2\pi ikn).
$$
\end{cor}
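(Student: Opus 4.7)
The plan is to deduce the corollary as a direct specialization of Theorem~\ref{thm-1}, so the main work is bookkeeping: verify the hypotheses, evaluate every building block of the theorem for the sequence $\frak{A}_0(n)$, and match terms.

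First I would check that the sequence $\frak{A}_0(n)$, having only one nonzero term equal to $1$, trivially satisfies assumptions (i) and (ii): the growth estimate is vacuous, and the associated Dirichlet series is the single term
\[
L(s;\frak{A}_0(n)) = n^{-s},
\]
which is entire. Substituting this into the definition \eqref{def-G} yields
\[
L_1(s_1,s_2;\frak{A}_0(n))=\frac{\Gamma(1-s_1)\Gamma(s_1+s_2-1)}{\Gamma(s_2)}\,n^{-(s_1+s_2-1)},
\]
which is exactly the first term on the right-hand side of \eqref{cor-formula}.

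Next I would evaluate the divisor sums $A_c(l)$. Since $\frak{A}_0(n)$ is supported only at the single index $n$, the sum $A_c(l)=\sum_{0<m\mid l}m^c a(m)$ reduces to $n^c$ when $n\mid l$ and $0$ otherwise. Writing $l=nk$ with $k\geq1$, the series in \eqref{def-F2} collapses to
\[
F_{\pm}(s_1,s_2;\frak{A}_0(n))=\sum_{k\geq 1}n^{s_1+s_2-1}\Psi(s_2,s_1+s_2;\pm 2\pi ikn)=n^{s_1+s_2-1}\sum_{k\geq 1}\Psi(s_2,s_1+s_2;\pm 2\pi ikn),
\]
giving precisely the displayed formula for $F_\pm$. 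Specializing the arguments $(s_1,s_2)\mapsto(1-s_2,1-s_1)$ as required by Theorem~\ref{thm-1} produces the prefactor $n^{(1-s_2)+(1-s_1)-1}=n^{1-s_1-s_2}$ with confluent hypergeometric parameters $(1-s_1,2-s_1-s_2)$, matching the right-hand side of \eqref{cor-formula}.

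Finally I would observe that for this choice of coefficients the series \eqref{A-zeta} collapses to the single-variable sum over $m$ with $n$ fixed, which is \eqref{specialcase}. Plugging the above computations into \eqref{thm-formula} yields \eqref{cor-formula}. There is no real obstacle in this argument: Theorem~\ref{thm-1} does all the analytic heavy lifting (meromorphic continuation of $F_\pm$, location of singular loci, etc.), and the proof of the corollary is a one-line specialization of each of the symbols $L$, $A_c$, and $F_\pm$. The only mild care needed is to keep track of the factor $n^{s_1+s_2-1}$ that comes out of $A_{s_1+s_2-1}(nk)=n^{s_1+s_2-1}$ so that it appears, after the swap of variables, as $n^{1-s_1-s_2}$ inside the symmetric part.
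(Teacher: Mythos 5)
Your proposal is correct and matches the paper's (implicit) argument exactly: the paper derives Corollary \ref{cor} as an immediate specialization of Theorem \ref{thm-1} to ${\frak A}_0(n)$, with $L(s;{\frak A}_0(n))=n^{-s}$ and $A_c(l)=n^c$ for $n\mid l$ (and $0$ otherwise) collapsing $F_{\pm}$ to the stated single sum over $k$. Nothing is missing.
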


\begin{rmk}
It is to be noted that
$$L_2(s_1,s_2; \frak{A})= \sum_{n\geq 1}a(n)L_2(s_1,s_2; \frak{A}_0(n)).$$
Therefore, multiplying the both sides of \eqref{cor-formula} by $a(n)$ and adding
with respect to $n$, we obtain \eqref{thm-formula}.
From this observation we may say that Corollary \ref{cor} is a ``refinement'' or "decomposition" of
Theorem \ref{thm-1}.
\end{rmk}

Another important example, closely related with our original motivation on periods,
is the case that ${\frak A}$ is the set of Fourier
coefficients of a certain cusp form.   Now assume that \eqref{form} is
a holomorphic cusp form
of weight $\kappa$ with respect to the Hecke congruence subgroup $\Gamma_0(N)$.
In this case the assumptios (i) and (ii) are surely satisfied; (i) is Deligne's
estimate and $\kappa$ is the weight.
In this case we write $L_2(s_1,s_2;{\frak A})$,
$L_1(s_1,s_2;{\frak A})$, $L(s,{\frak A})$ and $F_{\pm}(s_1,s_2;{\frak A})$ by
$L_2(s_1,s_2;f)$,
$L_1(s_1,s_2;f)$, $L(s,f)$ and $F_{\pm}(s_1,s_2;f)$, respectively.
Then \eqref{thm-formula} can be written as
\begin{align}\label{thm-formula-cusp}
&L_2(s_1,s_2;f)=L_1(s_1,s_2;f)\\
&\quad+\Gamma(1-s_1)\left\{F_+(1-s_2,1-s_1;f)+F_-(1-s_2,1-s_1;f)\right\}.
\notag
\end{align}
Since this formula is proved under the above very general setting, no property of
cusp form is used in the proof.
When $f$ is a cusp form, it is natural to expect some different type of results, for
which the modularity is essestially used.   Our second main result gives such a
functional equation.   Let
\begin{align}\label{f-tilde}
\widetilde{f}(\tau)=\left(f\left|_{\kappa}\; \omega_N\right.\right)(\tau)
=\left(\sqrt{N}\tau\right)^{-\kappa}f\left(-\frac{1}{N\tau}\right).
\end{align}
This $\widetilde{f}$ is again a cusp form of weight $\kappa$ with
respect to $\Gamma_0(N)$, and especially $\widetilde{f}=f$ when $N=1$.
We write the Fourier expansion of $\widetilde{f}$ at $\infty$ as
$\widetilde{f}(\tau)=\sum_{n\geq 1}\widetilde{a}(n)q^n$.
Define
\begin{align}\label{H-2-def}
H_{2,N}^{\pm}(s_1,s_2;\widetilde{f})&=\sum_{m,n\geq 1}m^{-s_1-s_2}\widetilde{a}(n)
   \Psi(s_1+s_2,s_2;\pm 2\pi in/Nm).
\end{align}

\begin{thm}\label{thm-2}
(The second form of the functional equation)
When $f(\tau)$
is a cusp form of weight $\kappa$ with respect to $\Gamma_0(N)$,
the functions $H_{2,N}^{\pm}(s_1,s_2;\widetilde{f})$ can be continued
meromorphically to the whole space $\mathbb{C}^2$, and we have
\begin{align}\label{thm2-formula}
&L_2(s_1,s_2;f)=L_1(s_1,s_2;f)\\
&\;+\frac{(2\pi)^{s_1+s_2-1}\Gamma(1-s_1)}{\Gamma(s_2)}
    \Gamma(\kappa-s_1-s_2+1)\notag\\
&\qquad\times N^{-\kappa/2}\left\{e^{\pi i(1-s_1-s_2)/2}
    H_{2,N}^+(-s_1,\kappa-s_2+1;\widetilde{f})\right.\notag\\
&\;\qquad\left. +e^{\pi i(s_1+s_2-1)/2}
    H_{2,N}^-(-s_1,\kappa-s_2+1;\widetilde{f})\right\}.\notag
\end{align}
\end{thm}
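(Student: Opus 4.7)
\textbf{Proof plan for Theorem \ref{thm-2}.}

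The plan is to start from the first functional equation (Theorem \ref{thm-1}) and to transform $\Gamma(1-s_1)\bigl[F_+(1-s_2,1-s_1;f) + F_-(1-s_2,1-s_1;f)\bigr]$ into the second and third lines of \eqref{thm2-formula} using the classical modular functional equation of $L(s,f)$. Since Theorem \ref{thm-1} already gives $L_2(s_1,s_2;f) = L_1(s_1,s_2;f) + \Gamma(1-s_1)(F_++F_-)$, the entire content of Theorem \ref{thm-2} reduces to showing that $F_{\pm}(1-s_2,1-s_1;f)$ can be rewritten in terms of the dual functions $H_{2,N}^{\mp}(-s_1,\kappa-s_2+1;\widetilde{f})$ with the prescribed phase and gamma prefactors.

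First I would insert the Mellin--Barnes representation
\[
\Psi(1-s_1,2-s_1-s_2;w) = \frac{1}{\Gamma(1-s_1)\Gamma(s_2)}\frac{1}{2\pi i}\int_{(c)}\Gamma(-t)\Gamma(1-s_1+t)\Gamma(s_2+t)\,w^{s_1-1-t}\,dt
\]
into the definition \eqref{def-F2} of $F_{\pm}$ at $w=\pm 2\pi il$ (the condition $|\arg w|<3\pi/2$ is satisfied), interchange sum and integral, and use the Ramanujan--type identity $\sum_{l\geq 1}A_{1-s_1-s_2}(l)\,l^{s_1-1-t} = \zeta(t+1-s_1)\,L(t+s_2,f)$. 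Then, invoking the standard modular functional equation $L(s,f) = i^{\kappa}N^{\kappa/2-s}(2\pi)^{2s-\kappa}\Gamma(\kappa-s)\Gamma(s)^{-1}L(\kappa-s,\widetilde{f})$, the factor $L(t+s_2,f)$ becomes $L(\kappa-t-s_2,\widetilde{f})$ up to explicit gamma and exponential factors; the gamma $\Gamma(t+s_2)$ produced by the functional equation conveniently cancels the $\Gamma(s_2+t)$ already inside the integrand.

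Next, expanding $\zeta(t+1-s_1) = \sum_{m\geq 1}m^{s_1-1-t}$ and $L(\kappa-t-s_2,\widetilde{f}) = \sum_{n\geq 1}\widetilde{a}(n)n^{t+s_2-\kappa}$, all $t$-dependent factors can be gathered into a power $(\mp 2\pi in/(Nm))^{t}$, leaving the inner integral
\[
\frac{1}{2\pi i}\int_{(c)}\Gamma(-t)\Gamma(1-s_1+t)\Gamma(\kappa-s_2-t)\left(\mp\frac{2\pi in}{Nm}\right)^{\!t}dt.
\]
I would evaluate this by applying the Beta identity $\Gamma(1-s_1+t)\Gamma(\kappa-s_2-t) = \Gamma(\kappa-s_1-s_2+1)\int_{0}^{1}x^{-s_1+t}(1-x)^{\kappa-s_2-1-t}dx$, reducing the remaining $t$-integral to $\frac{1}{2\pi i}\int_{(c)}\Gamma(-t)y^{t}dt = e^{-y}$ by residues, and then substituting $x=y/(1+y)$ to recognize the resulting $x$-integral as $\Gamma(1-s_1)\Psi(1-s_1,s_2-\kappa+1;\mp 2\pi in/(Nm))$. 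A final application of the Kummer transformation $\Psi(a,b;w)=w^{1-b}\Psi(1+a-b,2-b;w)$ then converts this into $(\mp 2\pi in/(Nm))^{\kappa-s_2}\,\Psi(\kappa-s_1-s_2+1,\kappa-s_2+1;\mp 2\pi in/(Nm))$, which is precisely the $\Psi$-function occurring in the definition \eqref{H-2-def} of $H_{2,N}^{\mp}(-s_1,\kappa-s_2+1;\widetilde{f})$.

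The main obstacle is the careful bookkeeping of branches and phase factors: the contributions $(\pm 2\pi il)^{s_1-1-t}$, $i^{\kappa}$ from the functional equation of $L$, and $(\mp 2\pi in/(Nm))^{\kappa-s_2}$ from the Kummer transformation must consolidate precisely into the $e^{\pm\pi i(1-s_1-s_2)/2}$ and $e^{\pm\pi i(s_1+s_2-1)/2}$ factors prescribed by \eqref{thm2-formula}; this works cleanly because $\kappa$ is even for cusp forms on $\Gamma_0(N)$ with trivial nebentypus. One must also justify every interchange of summation and contour integration in a suitable open region of $\mathbb{C}^2$, together with the contour shifts required to place $(c)$ between the relevant sets of poles. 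Once the identity \eqref{thm2-formula} is established in such a region, the meromorphic continuation of $H_{2,N}^{\pm}$ to all of $\mathbb{C}^2$ asserted in the theorem follows from the identity together with the meromorphic continuations of $L_2$, $L_1$, and $F_{\pm}$ provided by Theorem \ref{thm-1}.
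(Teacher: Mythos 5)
Your route is genuinely different from the paper's. The paper does not pass through Theorem \ref{thm-1} at all: it returns to the integral expressions \eqref{I-31-def}--\eqref{I-32-def} for $I_{31},I_{32}$, applies the modular transformation \eqref{f-tilde} directly to $f(mz)$ inside the integrand, substitutes $z=1/w$, expands $\widetilde f$ in its Fourier series, and reads off the $\Psi$-functions defining $H_{2,N}^{\pm}$; you instead keep the output of Theorem \ref{thm-1} and convert $F_{\pm}$ into $H_{2,N}^{\mp}$ by inserting a Mellin--Barnes representation of $\Psi$ and invoking the functional equation of $L(s,f)$ on the Dirichlet-series factor $\zeta(t+1-s_1)L(t+s_2,f)$. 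Your algebra checks out where I verified it (the parameters $a-b+1=s_2$ in the Barnes integral, the cancellation of $\Gamma(t+s_2)$, the reassembly into $\Psi(\kappa-s_1-s_2+1,\kappa-s_2+1;\mp 2\pi in/Nm)$ via the Beta identity and Kummer's transformation \eqref{transf}), and the two methods face the same underlying difficulty in different guises: the region where the $F_{\pm}$-series converges ($\sigma_2>(\kappa+1)/2$, cf.\ \eqref{region6}) is disjoint from the region \eqref{region10} where the $H_{2,N}^{\pm}$-series converges. The paper bridges this by observing that cuspidality of $f$ at $0$ extends the Section \ref{sec-3} computation to the larger region \eqref{region6bis}; in your scheme the bridge must be the Barnes integral itself, which forces a contour shift between the strip where $L(t+s_2,f)$ converges ($c>(\kappa+1)/2-\sigma_2$) and the strip where $L(\kappa-t-s_2,\widetilde f)$ converges ($c<(\kappa-1)/2-\sigma_2$). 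You acknowledge that shifts are needed, but you should verify explicitly that in a suitably chosen open set of $(s_1,s_2)$ no poles of $\Gamma(-t)$, $\Gamma(1-s_1+t)$, $\Gamma(\kappa-s_2-t)$ or of $\zeta(t+1-s_1)$ (at $t=s_1$) are crossed, since your target identity \eqref{thm2-formula} contains no residue terms.

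The genuine gap is in your last sentence. Theorem \ref{thm-2} asserts that $H_{2,N}^{+}$ and $H_{2,N}^{-}$ are \emph{each} meromorphically continuable to $\mathbb{C}^2$, and the identity \eqref{thm2-formula} only involves the single linear combination $e^{\pi i(1-s_1-s_2)/2}H_{2,N}^{+}+e^{\pi i(s_1+s_2-1)/2}H_{2,N}^{-}$. From the continuations of $L_2$, $L_1$ and $F_{\pm}$ you can therefore continue that one combination, but not the two functions separately; there is no second independent relation to disentangle them, and $H^{-}$ is not in general obtained from $H^{+}$ by any symmetry when the $\widetilde a(n)$ are complex. This is precisely why the paper devotes all of Section \ref{sec-6} to a separate argument: each $H_{2,N}^{\pm}$ is given its own Mellin--Barnes representation \eqref{H-7-4} in terms of $\zeta(s_1+s_2-u)L(u,\widetilde f)$, and the continuation is obtained by deforming the $u$-contour around the moving poles (A)--(D). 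To complete your proof you would need to supply an analogous independent continuation of each $H_{2,N}^{\pm}$ --- for instance by the asymptotic-expansion device \eqref{asymp} used for $F_{\pm}$ in Section \ref{sec-4}, or by the paper's contour-deformation argument --- rather than deducing it from \eqref{thm2-formula}.
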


The proof of this theorem will be given in Sections \ref{sec-5} and \ref{sec-6}.

Recall that an important application of the classical functional
equation \eqref{0-riemann-fe} is that from which we can find the
"trivial zeros" at negative even integer points of $\zeta(s)$.
The above Theorem \ref{thm-2} has the same type of application.
In fact, as we will see in Section \ref{sec-7}, we can show the 
following

\begin{cor}\label{cor-2}
For any non-negative integer $l$, the hyperplane $\Re s_2=-l$ is a
zero-divisor of $L_2(s_1,s_2;f)$.
\end{cor}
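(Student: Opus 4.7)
The plan is to read Corollary \ref{cor-2} directly off the functional equation \eqref{thm2-formula}. The key observation is that every term on the right-hand side of \eqref{thm2-formula} carries an explicit factor $1/\Gamma(s_2)$: in $L_1(s_1,s_2;f)$ this is built into the definition \eqref{def-G}, and in the $H_{2,N}^{\pm}$-term the $\Gamma(s_2)$ sits in the denominator by inspection. Since $\Gamma(s_2)$ has a simple pole at each non-positive integer, the reciprocal $1/\Gamma(s_2)$ vanishes on every hyperplane $s_2=-l$, $l\in\mathbb{Z}_{\geq 0}$. If the remaining factors on the right-hand side stay finite for generic $s_1$ when we set $s_2=-l$, then $L_2(s_1,-l;f)$ vanishes identically in $s_1$, which is exactly the claim that $\{s_2=-l\}$ is a zero-divisor of $L_2$.

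The step that actually has to be checked is that no other factor acquires a singularity along the entire hyperplane $s_2=-l$ capable of cancelling the zero coming from $1/\Gamma(s_2)$. I would verify this factor by factor. Restricting $L_1$ to $s_2=-l$ gives
$$\Gamma(-l)^{-1}\,\Gamma(1-s_1)\,\Gamma(s_1-l-1)\,L(s_1-l-1;f),$$
in which the Gamma functions and $L(\,\cdot\,;f)$ (entire, because $f$ is cuspidal) are meromorphic functions of the single variable $s_1$; their singularities form only a discrete subset of the $s_1$-line, not a hyperplane in $\mathbb{C}^2$. For the $H_{2,N}^{\pm}$-term, the factors $(2\pi)^{s_1+s_2-1}$, $\Gamma(1-s_1)$, $\Gamma(\kappa-s_1-s_2+1)$, $N^{-\kappa/2}$ and the exponentials $e^{\pm\pi i(1-s_1-s_2)/2}$ become, on $s_2=-l$, entire or at worst isolated-pole functions of $s_1$, and Theorem \ref{thm-2} itself asserts that $H_{2,N}^{\pm}(-s_1,\kappa+l+1;\widetilde{f})$ is meromorphic on $\mathbb{C}^2$, hence its restriction to $s_2=-l$ is meromorphic in $s_1$ with at most a discrete polar set.

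Consequently, outside a nowhere-dense subset of the hyperplane $s_2=-l$, both summands on the right-hand side of \eqref{thm2-formula} vanish, so that $L_2(s_1,-l;f)=0$ on an open dense subset of that hyperplane. Because $L_2(s_1,s_2;f)$ is meromorphic on $\mathbb{C}^2$, this forces its restriction to the full hyperplane $\{s_2=-l\}$ to be zero in the divisor sense, which is the desired statement. The only genuine obstacle in the argument is the routine bookkeeping of poles indicated above; once Theorem \ref{thm-2} is in hand there is no further analytic subtlety, and the statement is the exact analogue of the classical derivation of the trivial zeros of $\zeta(s)$ from \eqref{0-riemann-fe}.
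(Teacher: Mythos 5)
Your overall strategy is the same as the paper's: every term on the right-hand side of \eqref{thm2-formula} carries an explicit factor $1/\Gamma(s_2)$, so it suffices to check that the remaining factors stay finite at generic points of the hyperplane $s_2=-l$. The gap is in how you handle $H_{2,N}^{\pm}(-s_1,\kappa-s_2+1;\widetilde{f})$. You argue that, since Theorem \ref{thm-2} provides meromorphic continuation of these functions to all of $\mathbb{C}^2$, their restriction to $s_2=-l$ is automatically a meromorphic function of $s_1$ with only a discrete polar set. That inference is false for functions of two complex variables: the polar set of a meromorphic function on $\mathbb{C}^2$ is a union of analytic hypersurfaces, and nothing in ``meromorphic on $\mathbb{C}^2$'' prevents one of those hypersurfaces from being exactly the hyperplane $s_2=-l$. (The function $\Gamma(s_2)$ itself, viewed on $\mathbb{C}^2$, is the standard example: it is meromorphic, yet identically singular along each $s_2=-l$.) If $H_{2,N}^{\pm}(-s_1,\kappa-s_2+1;\widetilde{f})$ had a pole along the whole hyperplane, it could cancel the zero of $1/\Gamma(s_2)$ and the corollary would fail; ruling this out is precisely the nontrivial content of the statement, so it cannot be waved through.

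The paper closes exactly this gap with a concrete input from Sections \ref{sec-5} and \ref{sec-6}: the double series defining $H_{2,N}^{\pm}(-s_1,\kappa-s_2+1;\widetilde{f})$ converges absolutely in the region \eqref{region10}, that is, $\sigma_1<0$ and $\sigma_2<(\kappa-1)/2$. Since $\kappa\geq 2$, this region meets every hyperplane $s_2=-l$ with $l\geq 0$ in a nonempty open set (take $\sigma_1<0$), so $H_{2,N}^{\pm}(-s_1,\kappa-s_2+1;\widetilde{f})$ is finite there and the factor $1/\Gamma(s_2)$ annihilates the entire second term, while the first term is handled by \eqref{def-G} exactly as you describe. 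Once you replace your meromorphy argument by this convergence statement (or by any other proof that the hyperplane $s_2=-l$ is not a component of the polar divisor of $H_{2,N}^{\pm}(-s_1,\kappa-s_2+1;\widetilde{f})$), your proof is complete and coincides with the paper's.
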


Note that this corollary cannot be deduced from Theorem \ref{thm-1}.
These zero-divisors may be regarded as "trivial zeros" of
$L_2(s_1,s_2;f)$.

\begin{rmk}\label{rem1.5}
It is to be noted that, when $f(\tau)$ is a cusp form, $L_1(s_1,s_2;f)$ also
satisfies a functional equation.   Let
$$
L_1^*(s_1,s_2;f)=(2\pi)^{-s_1-s_2}\Gamma(s_1)\Gamma(s_2)L_1(s_1,s_2;f).
$$
Then from the functional equation for $L(s,f)$ (see, e.g.,
\cite[Theorem 4.3.6]{Miy}) we can deduce
$$
L_1^*(s_1,\kappa-2s_1-s_2+2;\widetilde{f})=(-1)^{\kappa/2}N^{s_1+s_2-\kappa/2-1}
L_1^*(s_1,s_2;f).
$$
\end{rmk}

\begin{rmk}\label{rem2}
A very different type of functional equation for certain
iterated integrals related with certain multiple Hecke $L$-series has been
proved by \cite{CI}.
\end{rmk}
\bigskip

{\bf{ Acknowledgement }} \, The authors express their sincere gratitude to Professor Takashi Taniguchi for his
valuable comments.

\section{\bf{An integral expression}}\label{sec-1}

Now we assume (i) and (ii) in the introduction and start the proof of
Theorem \ref{thm-1}.
In this section we prove the following integral
expression of $L_2(s_1,s_2;{\frak A})$.

\begin{prop}\label{prop-2}
In the region
\begin{align}\label{region2}
\sigma_1>0,\quad \sigma_2>\frac{\kappa+1}{2},\quad
\sigma_1+\sigma_2>\frac{\kappa+3}{2},
\end{align}
the double integral
\begin{align}\label{Lambda}
\Lambda(s_1,s_2;{\frak A})=\int_{0}^{\infty} f(iy)
\int_{0}^{\infty}\frac{1}{e^{2\pi(x+y)}-1}x^{s_1-1} y^{s_2-1} dxdy
\end{align}
converges, and we have
\begin{align}\label{f-i-e-sp}
L_2(s_1,s_2;{\frak A})
 =\frac{(2\pi)^{s_1+s_2}}{\Gamma(s_1)\Gamma(s_2)}\Lambda(s_1,s_2;{\frak A}).
\end{align}
\end{prop}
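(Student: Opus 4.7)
The plan is a standard Mellin-type manipulation: apply the Gamma-function identity
\[
\frac{\Gamma(s)}{a^{s}} = \int_{0}^{\infty} e^{-at}\,t^{s-1}\,dt \qquad(\Re a>0,\ \Re s>0)
\]
twice to split each summand of \eqref{A-zeta} into a double integral, interchange the summation with the integration, and recognize the resulting inner sums as a geometric series together with the Fourier-type series $f(iy)$. Concretely, taking $a=2\pi m$ with exponent $s_{1}$ and $a=2\pi(m+n)$ with exponent $s_{2}$ gives
\[
\frac{1}{m^{s_{1}}(m+n)^{s_{2}}}
=\frac{(2\pi)^{s_{1}+s_{2}}}{\Gamma(s_{1})\Gamma(s_{2})}
\int_{0}^{\infty}\!\!\int_{0}^{\infty} e^{-2\pi m x-2\pi(m+n)y}\,x^{s_{1}-1}y^{s_{2}-1}\,dx\,dy.
\]

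After substituting this into \eqref{A-zeta} and (provisionally) moving the summations inside, the sum on $m$ collapses as
\[
\sum_{m\geq 1}e^{-2\pi m(x+y)}=\frac{1}{e^{2\pi(x+y)}-1},
\]
while by \eqref{form} the sum on $n$ yields
\[
\sum_{n\geq 1}a(n)\,e^{-2\pi n y}=f(iy),
\]
since $q=e^{-2\pi y}$ at $\tau=iy$. Pulling out the prefactor $(2\pi)^{s_{1}+s_{2}}/\bigl(\Gamma(s_{1})\Gamma(s_{2})\bigr)$ then produces precisely the identity \eqref{f-i-e-sp}, with the remaining double integral recognised as $\Lambda(s_{1},s_{2};{\frak A})$ in \eqref{Lambda}.

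The main technical point, and the only real obstacle, is to verify that the whole procedure is legitimate in the region \eqref{region2}, i.e.\ that the double integral converges absolutely when $a(n)$ is replaced by $|a(n)|$, so that Fubini applies. By assumption (i) one obtains the classical estimate $|f(iy)|\ll y^{-(\kappa+1)/2-\varepsilon}$ as $y\to 0^{+}$ together with exponential decay as $y\to\infty$. The inner $x$-integral $\int_{0}^{\infty}x^{\sigma_{1}-1}/(e^{2\pi(x+y)}-1)\,dx$ converges for $\sigma_{1}>0$, since the integrand behaves like $1/(x+y)$ near $x=0$ and decays exponentially at infinity; a short splitting of the region of integration at $x=y$ shows that as $y\to 0^{+}$ this inner integral is $O(1)$ when $\sigma_{1}>1$ and $O(y^{\sigma_{1}-1})$ when $0<\sigma_{1}<1$. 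Pairing these bounds against $y^{\sigma_{2}-1}|f(iy)|$ and integrating in $y$ over $(0,1]$ produces exactly the three constraints of \eqref{region2}: $\sigma_{1}>0$ from convergence of the inner integral itself, $\sigma_{2}>(\kappa+1)/2$ in the case $\sigma_{1}\geq 1$, and $\sigma_{1}+\sigma_{2}>(\kappa+3)/2$ in the case $0<\sigma_{1}<1$. The contributions from $y\geq 1$ are harmless due to the exponential decay of $f(iy)$ and $1/(e^{2\pi(x+y)}-1)$. Once this bookkeeping is complete, Fubini's theorem legitimises the interchange and yields \eqref{f-i-e-sp}.
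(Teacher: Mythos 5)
Your argument is correct and is essentially the paper's own proof run in the opposite direction: both rest on the termwise Gamma-integral identity $\Gamma(s)a^{-s}=\int_0^\infty e^{-at}t^{s-1}\,dt$ applied to $m^{-s_1}$ and $(m+n)^{-s_2}$, followed by an interchange of summation and integration, so the core mechanism is the same. The only substantive difference is how absolute convergence is certified: the paper dominates the interchanged expression by $(2\pi)^{-\sigma_2}\Gamma(\sigma_2)\sum_{m,n\geq 1}|a(n)|m^{-\sigma_1}(m+n)^{-\sigma_2}$ and quotes the convergence of this double series in the region \eqref{region1} (then removes a truncation at $y=\delta$), whereas you estimate the double integral directly via $|f(iy)|\ll y^{-(\kappa+1)/2-\varepsilon}$ as $y\to 0$ --- the bound \eqref{I-1-2}, which the paper only invokes later for $I_1$; both routes are valid, though yours silently passes over the boundary case $\sigma_1=1$, where the inner $x$-integral acquires a harmless logarithmic factor that is still absorbed by the condition $\sigma_2>(\kappa+1)/2$.
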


\begin{proof}
Put
\begin{align}\label{g-trivial}
g_0(\tau)=\sum_{m\geq 1}q^m=\sum_{m\geq 1}e^{2\pi i\tau m}
    =\frac{e^{2\pi i\tau}}{1-e^{2\pi i\tau}}=\frac{1}{e^{-2\pi i\tau}-1}.
\end{align}
Let $\delta>0$, and at first assume $y\geq\delta$.
Then
\begin{align}\label{f-i-e1}
\int_0^{\infty}g_0(i(x+y))x^{s_1-1}dx
&=\int_0^{\infty}\sum_{m\geq 1}e^{2\pi i\cdot i(x+y)m}x^{s_1-1}dx\\
&=\sum_{m\geq 1}e^{-2\pi ym}\int_0^{\infty}e^{-2\pi xm}x^{s_1-1}dx\notag\\
&=(2\pi)^{-s_1}\Gamma(s_1)\sum_{m\geq 1}\frac{e^{-2\pi ym}}{m^{s_1}}\notag
\end{align}
if the gamma integral converges, that is,
if $\sigma_1>0$ holds.   The change of integration and summation in the above can be
justified by absolute convergence (because $y>0\;$).   Therefore
\begin{align}\label{f-i-e2}
&\int_{\delta}^{\infty}f(iy)\int_0^{\infty}g_0(i(x+y))x^{s_1-1}dx\;y^{s_2-1}dy\\
&\;=(2\pi)^{-s_1}\Gamma(s_1)\int_{\delta}^{\infty}\sum_{n\geq 1}a(n)e^{-2\pi yn}
    \sum_{m\geq 1}\frac{1}{m^{s_1}}e^{-2\pi ym}y^{s_2-1}dy\notag\\
&\;=(2\pi)^{-s_1}\Gamma(s_1)\sum_{m,n\geq 1}\frac{a(n)}{m^{s_1}}
    \int_{\delta}^{\infty}e^{-2\pi y(m+n)}y^{s_2-1}dy,\notag
\end{align}
if we can again change the integration and summation.
The series on the right-hand side is
\begin{align*}
&\leq \sum_{m,n\geq 1}\frac{|a(n)|}{m^{\sigma_1}}
    \int_{\delta}^{\infty}e^{-2\pi y(m+n)}y^{\sigma_2-1}dy\\
&\leq \sum_{m,n\geq 1}\frac{|a(n)|}{m^{\sigma_1}}
    \int_{0}^{\infty}e^{-2\pi y(m+n)}y^{\sigma_2-1}dy\\
&=(2\pi)^{-\sigma_2}\Gamma(\sigma_2)\sum_{m,n\geq 1}\frac{|a(n)|}
    {m^{\sigma_1}(m+n)^{\sigma_2}}
\end{align*}
if $\sigma_2>0$ holds.   The resulting infinite series is convergent if
\eqref{region1} holds.
Therefore, if \eqref{region1} (which includes the conditon $\sigma_2>0$) holds, then
the change of integration and summation
in the course of \eqref{f-i-e2} is justified, and moreover, on the right-hand
side of \eqref{f-i-e2}, we can take the limit $\delta\to 0$ termwisely.
Then the right-hand side tends to
\begin{align}\label{f-i-e3}
(2\pi)^{-s_1-s_2}\Gamma(s_1)\Gamma(s_2)\sum_{m,n\geq 1}\frac{a(n)}
    {m^{s_1}(m+n)^{s_2}},
\end{align}
while the left-hand side of \eqref{f-i-e2} tends to $\Lambda(s_1,s_2;{\frak A})$.
This completes the proof.
\end{proof}

\section{\bf{Separating a single series factor}}\label{sec-2}

The integrand of the inner integral of the right-hand side of \eqref{f-i-e-sp}
is singular at $x+y=0$.   The next step is to ``separate'' the contribution of
this singularity.   Let
\begin{align}\label{h-def}
h(z)=\frac{1}{e^{2\pi z}-1}-\frac{1}{2\pi z}.
\end{align}
Using this function, we rewrite \eqref{f-i-e-sp} as follows:
\begin{align}\label{decompose}
\Lambda(s_1,s_2;{\frak A})&=\int_{0}^{\infty} f(iy) \int_{0}^{\infty}h(x+y)x^{s_1-1}
     y^{s_2-1}dxdy\\
     &\quad+\int_{0}^{\infty} f(iy) \int_{0}^{\infty}\frac{x^{s_1-1}y^{s_2-1}}
     {2\pi(x+y)}dxdy\notag\\
     &=I_1+I_2,\notag
\end{align}
say.    To verify this decomposition, we have to check the absolute convergence
of $I_1$ and $I_2$.    Consider $I_2$ under the condition
\begin{align}\label{region3}
0<\sigma_1<1.
\end{align}
It is known that
\begin{align}\label{beta}
\int_0^{\infty}\frac{x^{s_1-1}}{x+y}dx=y^{s_1-1}\Gamma(s_1)\Gamma(1-s_1)
\end{align}
holds for $0<\sigma_1<1$ and $y>0$.   Therefore, under \eqref{region3}, we have
\begin{align}\label{I-2-1}
I_2&=\lim_{\delta\to 0}\int_{\delta}^{\infty}f(iy)y^{s_2-1}\frac{y^{s_1-1}}{2\pi}
\Gamma(s_1)\Gamma(1-s_1)
            dy\\
  &=\frac{1}{2\pi}\Gamma(s_1)\Gamma(1-s_1)\int_0^{\infty}f(iy)y^{s_1+s_2-2}dy,\notag
\end{align}
if the last integral is convergent.   But the last integral is
\begin{align}\label{I-2-2}
&=\int_0^{\infty}\sum_{n\geq 1}a(n)e^{-2\pi yn}y^{s_1+s_2-2}dy\\
&=\sum_{n\geq 1}a(n)\int_0^{\infty}e^{-2\pi yn}y^{s_1+s_2-2}dy\notag\\
&=(2\pi)^{-s_1-s_2+1}\Gamma(s_1+s_2-1)\sum_{n\geq 1}\frac{a(n)}{n^{s_1+s_2-1}}\notag
\end{align}
if $\sigma_1+\sigma_2>1$,
and the last sum is absolutely convergent if
$\sigma_1+\sigma_2>(\kappa+3)/2$ and is equal to
$L(s_1+s_2-1,{\frak A})$.   This verifies
the change of
integration and summation in the course of \eqref{I-2-2}, and the convergence
of the last integral of \eqref{I-2-1}.   Therefore we obtain
\begin{align}\label{I-2-3}
I_2=(2\pi)^{-s_1-s_2}\Gamma(s_1)\Gamma(1-s_1)\Gamma(s_1+s_2-1)L(s_1+s_2-1,{\frak A})
\end{align}
in the region
\begin{align}\label{region4}
0<\sigma_1<1,\quad \sigma_1+\sigma_2>\frac{\kappa+3}{2}.
\end{align}

As for $I_1$, we first note that $h(z)$ is holomorphic at $z=0$, so it is
$O(1)$ when $|z|$ is small.   If the real part of $z$ is large, then clearly
$h(z)=O(|z|^{-1})$.   Therefore
\begin{align}\label{I-1-1}
I_1&\ll\int_0^{\infty}|f(iy)|\left\{\int_0^1 x^{\sigma_1-1}dx +
   \int_1^{\infty}\frac{x^{\sigma_1-1}}{x+y}dx\right\}y^{\sigma_2-1}dy,
\end{align}
and the quantity in the curly bracket is $O(1)$, uniformly in $y$, if
\eqref{region3} holds.
Under the condition (i) at the beginning of Section \ref{sec-1}, it is known that
\begin{align}\label{I-1-2}
f(iy)\ll\left\{
   \begin{array}{lll}
     y^{-(\kappa+1)/2-\varepsilon} & {\rm as} &  y\to 0,\\
     e^{-2\pi y}          & {\rm as} &  y\to\infty
   \end{array}\right.
\end{align}
(see \cite[Lemma 4.3.3]{Miy}).   Using these estimates we find that
the right-hand side of \eqref{I-1-1} is convergent absolutely if
\eqref{region3} and $\sigma_2>(\kappa+1)/2$ holds.

Therefore now we verify the decomposition \eqref{decompose} under the
condition \eqref{region4}.
In this region, combining with \eqref{f-i-e-sp} and \eqref{I-2-3}, we obtain
\begin{align}\label{basic}
L_2(s_1,s_2;{\frak A})=J_2(s_1,s_2;{\frak A})+L_1(s_1,s_2;{\frak A})
\end{align}
where
\begin{align}\label{J-2-def}
J_2(s_1,s_2;{\frak A})=\frac{(2\pi)^{s_1+s_2}}{\Gamma(s_1)\Gamma(s_2)}
\int_0^{\infty}f(iy)\int_0^{\infty}h(x+y)x^{s_1-1}y^{s_2-1}dxdy.
\end{align}

\section{\bf{Contour integration}}\label{sec-3}

In this section we show an infinite series expression of $J_2(s_1,s_2;{\frak A})$,
whose terms can be written in terms of confluent hypergeometric functions.

Let $\mathcal{C}$ be the contour which starts at $+\infty$, goes along the real
axis to a small positive number, rounds the origin counterclockwise, and then
goes back to $+\infty$ again along the real axis.
At first we assume \eqref{region4}.
Then, since $\sigma_1>0$, we can replace the inner integral
of \eqref{J-2-def} by the integral along $\mathcal{C}$ to obtain
\begin{align}\label{contour}
J_2(s_1,s_2;{\frak A})=
\frac{(2\pi)^{s_1+s_2}}{\Gamma(s_1)\Gamma(s_2)(e^{2\pi is_1}-1)}I_3,
\end{align}
where
\begin{align}\label{I-3-def}
I_3=\int_0^{\infty}f(iy)y^{s_2-1}\int_{\mathcal{C}}h(x+y)x^{s_1-1}dxdy.
\end{align}
The inner integral of \eqref{I-3-def} is absolutely convergent for any
$s_1$ with $\sigma_1<1$, and is $O(1)$ uniformly in $y$.   Therefore, using
\eqref{I-1-2}, we find that the double integral on the right-hand side of
\eqref{I-3-def} is absolutely convergent when
\begin{align}\label{region5}
\sigma_1<1,\quad \sigma_2>\frac{\kappa+1}{2}.
\end{align}
Our assumption (ii) implies that $L(s,{\frak A})$ is meromorphic in $\mathbb{C}$, and
$L_2(s_1,s_2;{\frak A})$ is meromorphic in the whole space $\mathbb{C}^2$,
as was mentioned in the introduction.
Therefore we can now
conclude that formula \eqref{basic} is valid in the region \eqref{region5}.

Next we replace the contour $\mathcal{C}$ by
$$
\mathcal{C}_R=\{x=-y+2\pi(R+1/2)e^{i\phi}\;|\;0\leq\phi<2\pi\}
$$
($R\in\mathbb{N}$), and let $R\to\infty$.
Since $h(x+y)=O(1)$ on $\mathcal{C}_R$ (\cite[formula (5.2)]{Mat98}),
we see that
$$
\int_{\mathcal{C}_R}h(x+y)x^{s_1-1}dx \to 0
$$
(as $R\to\infty$) if $\sigma_1<0$, which we now assume.   That is, we are now in
the subregion
\begin{align}\label{region6}
\sigma_1<0,\quad \sigma_2>\frac{\kappa+1}{2}
\end{align}
of \eqref{region5}.
Then by the residue calculus we have
\begin{align}\label{inner}
\int_{\mathcal{C}}h(x+y)x^{s_1-1}dx=-2\pi i\sum_{m\in\mathbb{Z},m\neq 0}{\rm Res}
     _{x=-y+im}\left(h(x+y)x^{s_1-1}\right),
\end{align}
and the value of the residue at $x=-y+im$ ($m\neq 0$) is given by
\begin{align}\label{residue}
&\lim_{\delta\to 0}\delta\left(\frac{1}{e^{2\pi(im+\delta)}-1}-\frac{1}
   {2\pi(im+\delta)}\right)(-y+im+\delta)^{s_1-1}\\
&=\lim_{\delta\to 0}\frac{1}{2\pi}\frac{2\pi\delta}{e^{2\pi\delta}-1}
    (-y+im+\delta)^{s_1-1}=\frac{1}{2\pi}(-y+im)^{s_1-1}.\notag
\end{align}
Substituting these results into \eqref{I-3-def} we obtain
\begin{align}\label{I-3-1}
I_3=-i\int_0^{\infty}f(iy)\sum_{m\in\mathbb{Z},m\neq 0}(-y+im)^{s_1-1}
       y^{s_2-1}dy.
\end{align}
When $m>0$, we see that
\begin{align}\label{m-posi}
(-y+im)^{s_1-1}=(e^{\pi i}y+e^{\pi i/2}m)^{s_1-1}
              =(e^{\pi i/2}m(z+1))^{s_1-1},
\end{align}
where $z$ is defined by $y=me^{-\pi i/2}z$.    Similarly, when $m<0$,
\begin{align}\label{m-nega}
(-y+im)^{s_1-1}=(e^{\pi i}y+e^{3\pi i/2}|m|)^{s_1-1}
              =(e^{3\pi i/2}|m|(z+1))^{s_1-1},
\end{align}
where $z$ is defined by $y=|m|e^{\pi i/2}z$.   Therefore, if the change of
integration and summation is possible, from \eqref{I-3-1} we obtain
\begin{align}\label{I-3-2}
I_3=-i(I_{31}+I_{32}),
\end{align}
where
\begin{align}\label{I-31-def}
I_{31}&=\sum_{m\geq 1}(e^{\pi i/2}m)^{s_1-1}\int_0^{i\infty}f(mz)
    (me^{-\pi i/2}z)^{s_2-1}(z+1)^{s_1-1}me^{-\pi i/2}dz\\
   &=\sum_{m\geq 1}e^{\pi i(s_1-s_2-1)/2}m^{s_1+s_2-1}\int_0^{i\infty}f(mz)
      z^{s_2-1}(z+1)^{s_1-1}dz\notag
\end{align}
and (rewriting $|m|$ as $m$)
\begin{align}\label{I-32-def}
I_{32}&=\sum_{m\geq 1}(e^{3\pi i/2}m)^{s_1-1}\int_0^{-i\infty}f(-mz)
    (me^{\pi i/2}z)^{s_2-1}(z+1)^{s_1-1}me^{\pi i/2}dz\\
   &=\sum_{m\geq 1}e^{\pi i(3s_1+s_2-3)/2}m^{s_1+s_2-1}\int_0^{-i\infty}f(-mz)
      z^{s_2-1}(z+1)^{s_1-1}dz.\notag
\end{align}
Substitute the definition of $f(mz)$ into \eqref{I-31-def} and change the
integration and summation again to obtain
$$
I_{31}=\sum_{m,n\geq 1}e^{\pi i(s_1-s_2-1)/2}m^{s_1+s_2-1}a(n)
    \int_0^{i\infty}e^{2\pi imnz}z^{s_2-1}(z+1)^{s_1-1}dz.
$$
Putting $mn=l$, this is equal to
\begin{align}\label{I-31-1}
e^{\pi i(s_1-s_2-1)/2}\sum_{l\geq 1}A_{s_1+s_2-1}^0(l)
     \int_0^{i\infty}e^{2\pi ilz}z^{s_2-1}(z+1)^{s_1-1}dz,
\end{align}
where
\begin{align}\label{A-def}
A_c^0(l)=\sum_{mn=l}m^c a(n).
\end{align}
Then we see that
the integral on the right-hand side of \eqref{I-31-1} is
$\Gamma(s_2)\Psi(s_2,s_1+s_2;-2\pi il)$
(because $\sigma_2>0$ is satisfied by
\eqref{region6}, and $\phi=\pi/2$ so $\phi+\arg(-2\pi il)=0$), hence
\begin{align}\label{I-31-2}
I_{31}=e^{\pi i(s_1-s_2-1)/2}\Gamma(s_2)\sum_{l\geq 1}A_{s_1+s_2-1}^0(l)
          \Psi(s_2,s_1+s_2;-2\pi il).
\end{align}
Similarly we obtain
\begin{align}\label{I-32-2}
I_{32}=e^{\pi i(3s_1+s_2-3)/2}\Gamma(s_2)\sum_{l\geq 1}A_{s_1+s_2-1}^0(l)
          \Psi(s_2,s_1+s_2;2\pi il).
\end{align}

To verify the above changing process (twice) of integration and summation,
we check the absolute convergence of the resulting expression.
Putting $lz=i\xi$, we see that the integral on the right-hand side of
\eqref{I-31-1} is
\begin{align}\label{I-31-3}
&=\int_0^{\infty}e^{-2\pi\xi}\left(\frac{i\xi}{l}\right)^{s_2-1}
         \left(1+\frac{i\xi}{l}\right)^{s_1-1}\frac{i}{l}d\xi\\
&\ll \int_0^{\infty}e^{-2\pi\xi}\left(\frac{\xi}{l}\right)^{\sigma_2-1}
       \left|\left(1+\frac{i\xi}{l}\right)^{s_1-1}\right|\frac{d\xi}{l}\notag,
\end{align}
where the implied constant depends on $s_2$.   Further,
\begin{align}\label{I-31-3bis}
\left|\left(1+\frac{i\xi}{l}\right)^{s_1-1}\right|
  &=\left|1+\frac{i\xi}{l}\right|^{\sigma_1-1}e^{-t_1\arg(1+i\xi/l)}.
\end{align}
The first factor on the right-hand side is $\leq 1$, because $|1+i\xi/l|\geq 1$
and $\sigma_1<0$ by
\eqref{region6}, while the second factor is $O_{t_1}(1)$ because
$|\arg(1+i\xi/l)|\leq\pi/2$.   Hence the right-hand side of \eqref{I-31-3} is
$$
\ll \int_0^{\infty}e^{-2\pi\xi}\left(\frac{\xi}{l}\right)^{\sigma_2-1}
     \frac{d\xi}{l}
\ll l^{-\sigma_2}.
$$
Therefore \eqref{I-31-1} is
\begin{align*}
&\ll\sum_{l\geq 1}|A_{s_1+s_2-1}^0(l)|l^{-\sigma_2}
\ll \sum_{m,n\geq 1}m^{\sigma_1+\sigma_2-1}|a(n)|(mn)^{-\sigma_2}\\
&\ll\sum_{m\geq 1}m^{\sigma_1-1}\sum_{n\geq 1}n^{(\kappa-1)/2+\varepsilon
   -\sigma_2},
\end{align*}
which is convergent in the region \eqref{region6}.   Therefore the whole step
of the above procedure is verified.

Define
\begin{align}\label{F-def}
F_{\pm}^0(s_1,s_2;{\frak A})=\sum_{l\geq 1}A_{s_1+s_2-1}^0(l)\Psi(s_2,s_1+s_2;
    \pm 2\pi il).
\end{align}
Using notation \eqref{F-def}, from \eqref{I-3-2}, \eqref{I-31-2} and
\eqref{I-32-2} we obtain
\begin{align}\label{I-3-3}
I_3&=-i\Gamma(s_2)\left\{e^{\pi i(s_1-s_2-1)/2}F_-^0(s_1,s_2;{\frak A})+
      e^{\pi i(3s_1+s_2-3)/2}F_+^0(s_1,s_2;{\frak A})\right\}\\
  &=i\Gamma(s_2)\left\{e^{\pi i(s_1-s_2+1)/2}F_-^0(s_1,s_2;{\frak A})+
      e^{\pi i(3s_1+s_2-1)/2}F_+^0(s_1,s_2;{\frak A})\right\}\notag
\end{align}
in the region \eqref{region6}.

Using the identity
\begin{align}\label{gamma}
\frac{1}{\Gamma(s_1)(e^{2\pi is_1}-1)}=\frac{\Gamma(1-s_1)}
   {2\pi ie^{\pi i s_1}}
\end{align}
we find that \eqref{contour} is rewritten as
\begin{align}\label{contour2}
J_2(s_1,s_2;{\frak A})=
\frac{(2\pi)^{s_1+s_2-1}\Gamma(1-s_1)}{ie^{\pi i s_1}\Gamma(s_2)}I_3.
\end{align}
Substituting \eqref{I-3-3} into the above, we obtain
\begin{align}\label{J-1}
J_2(s_1,s_2;{\frak A})
=&\frac{(2\pi)^{s_1+s_2-1}\Gamma(1-s_1)}{e^{\pi i s_1}}\\
  &\qquad  \times \left\{e^{\pi i(s_1-s_2+1)/2}F_-^0(s_1,s_2;{\frak A})+
      e^{\pi i(3s_1+s_2-1)/2}F_+^0(s_1,s_2;{\frak A})\right\}\notag\\
 =&(2\pi)^{s_1+s_2-1}\Gamma(1-s_1)\notag\\
  &\qquad  \times \left\{e^{\pi i(1-s_1-s_2)/2}F_-^0(s_1,s_2;{\frak A})+
      e^{\pi i(s_1+s_2-1)/2}F_+^0(s_1,s_2;{\frak A})\right\}\notag
\end{align}
in the region \eqref{region6}.
\section{\bf{Completion of the proof of Theorem \ref{thm-1}}}\label{sec-4}

The transformation formula
\begin{align}\label{transf}
\Psi(a,c;x)=x^{1-c}\Psi(a-c+1,2-c;x)
\end{align}
of the confluent hypergeometric function is well-known
(\cite[formula 6.5(6)]{Er}).   Using \eqref{transf}, we see that
\begin{align}\label{transf2}
&F_{\pm}^0(s_1,s_2;{\frak A})\\
&\quad=(2\pi e^{\pm \pi i/2})^{1-s_1-s_2}\sum_{l\geq 1}A_{s_1+s_2-1}^0(l)
    l^{1-s_1-s_2}\Psi(1-s_1,2-s_1-s_2;\pm 2\pi il).\notag
\end{align}
Since
$$
A_c^0(l)l^{-c}=\sum_{mn=l}m^c a(n)l^{-c}=\sum_{mn=l}\left(\frac{l}{m}\right)^{-c}
a(n)=\sum_{0<n|l}n^{-c}a(n)
$$
which is equal to $A_{-c}(l)$ (recall \eqref{A-def2}), we obtain
\begin{align}\label{F-0-F}
F_{\pm}^0(s_1,s_2;{\frak A})=(2\pi e^{\pm \pi i/2})^{1-s_1-s_2}
     F_{\pm}(1-s_2,1-s_1;{\frak A}).
\end{align}
Substituting this into \eqref{J-1}, we obtain
\begin{align}\label{J-2}
J_2(s_1,s_2;{\frak A})=\Gamma(1-s_1)\left\{F_-(1-s_2,1-s_1;{\frak A})
+F_+(1-s_2,1-s_1;{\frak A})\right\}
\end{align}
in the region \eqref{region6}.   Combining with \eqref{basic}, we now obtain
\eqref{thm-formula} in the region \eqref{region6}.

To complete the proof of the above theorem, it is sufficient to show that
$F_{\pm}(s_1,s_2;{\frak A})$ can be continued to the whole space $\mathbb{C}^2$.
For this purpose we use \eqref{asymp}.   This implies
\begin{align}\label{asymp2}
&\Psi(s_2,s_1+s_2;\pm 2\pi il)\\
&\quad=\sum_{j=0}^{M-1}\frac{(-1)^j (1-s_1)_j(s_2)_j}{j!}
    (\pm 2\pi il)^{-s_2-j}+\rho_M(s_2,s_1+s_2;\pm 2\pi il),\notag
\end{align}
so
\begin{align}\label{asymp3}
F_{\pm}(s_1,s_2;{\frak A})&=\sum_{j=0}^{M-1}\frac{(-1)^j (1-s_1)_j(s_2)_j}{j!}
    \sum_{l\geq 1}A_{s_1+s_2-1}(l)(\pm 2\pi il)^{-s_2-j}\\
   &+\sum_{l\geq 1}A_{s_1+s_2-1}(l)\rho_M(s_2,s_1+s_2;\pm 2\pi il).\notag
\end{align}
We see that
\begin{align}\label{asymp4}
&\sum_{l\geq 1}A_{s_1+s_2-1}(l)(\pm 2\pi il)^{-s_2-j}\\
&\quad=(\pm 2\pi i)^{-s_2-j}\sum_{l\geq 1}\sum_{n|l}n^{s_1+s_2-1}a(n)l^{-s_2-j}
          \notag\\
&\quad=(\pm 2\pi i)^{-s_2-j}\sum_{m,n\geq 1}n^{s_1+s_2-1}a(n)(mn)^{-s_2-j}
          \notag\\
&\quad=(\pm 2\pi i)^{-s_2-j}\sum_{m\geq 1}m^{-s_2-j}\sum_{n\geq 1}a(n)
     n^{s_1-1-j},\notag
\end{align}
whose last two sums are convergent when $\sigma_2>1-j$ and
$\sigma_1<j-(\kappa-1)/2$, and so the above
is equal to $(\pm 2\pi i)^{-s_2-j}\zeta(s_2+j)L(1-s_1+j,{\frak A})$.   Therefore
\begin{align}\label{asymp5}
&F_{\pm}(s_1,s_2;{\frak A})\\
&\;=\sum_{j=0}^{M-1}\frac{(-1)^j (1-s_1)_j(s_2)_j}{j!}(\pm 2\pi i)^{-s_2-j}
    \zeta(s_2+j)L(1-s_1+j,{\frak A})\notag\\
   &\quad+\sum_{l\geq 1}A_{s_1+s_2-1}(l)\rho_M(s_2,s_1+s_2;\pm 2\pi il).\notag
\end{align}
The explicit form of $\rho_M$ is
\begin{align}\label{rho}
\rho_M(a,c;x)&=\frac{(-1)^M(a-c+1)_M}{\Gamma(a)}\int_0^{e^{i\phi}\infty}
    e^{-xy}y^{a+M-1}\\
  &\quad\times\int_0^1 \frac{(1-\tau)^{M-1}}{(M-1)!}(1+\tau y)^{c-a-M-1}d\tau dy
   \notag
\end{align}
(see \cite[(3.3)]{Mat04}).   In the present situation, $\phi=\mp\pi/2$.
Therefore, if the change of integration and summation is possible, we have
\begin{align}\label{rho-2}
&\sum_{l\geq 1}A_{s_1+s_2-1}(l)\rho_M(s_2,s_1+s_2;\pm 2\pi il)\\
&\;=\frac{(-1)^M(1-s_1)_M}{\Gamma(s_2)}\int_0^{\mp i\infty}
   \sum_{l\geq 1}A_{s_1+s_2-1}(l)e^{\mp 2\pi ily}y^{s_2+M-1}\notag\\
&\quad\times \int_0^1 \frac{(1-\tau)^{M-1}}{(M-1)!}(1+\tau y)^{s_1-M-1}d\tau dy
   \notag\\
&\;=\frac{(-1)^M(1-s_1)_M}{\Gamma(s_2)}\int_0^{\infty}
   \sum_{l\geq 1}A_{s_1+s_2-1}(l)e^{-\eta}\left(\frac{\mp i\eta}{2\pi l}\right)
   ^{s_2+M-1}\notag\\
&\quad\times \int_0^1 \frac{(1-\tau)^{M-1}}{(M-1)!}\left(1\mp \frac{i\tau\eta}
    {2\pi l}\right)^{s_1-M-1}d\tau\frac{\mp i}{2\pi l}d\eta\notag\\
&\;=\frac{(-1)^M(1-s_1)_M}{\Gamma(s_2)}\int_0^{\infty}
     \sum_{l\geq 1}A_{s_1+s_2-1}(l)l^{-s_2-M}e^{-\eta}
     \left(\frac{\mp i\eta}{2\pi}\right)^{s_2+M-1}\notag\\
&\quad\times\int_0^1 \frac{(1-\tau)^{M-1}}{(M-1)!}\left(1\mp \frac{i\tau\eta}
    {2\pi l}\right)^{s_1-M-1}d\tau\frac{\mp i}{2\pi}d\eta.\notag
\end{align}
Similarly to \eqref{asymp4}, the sum with respect to $l$ is
$\zeta(s_2+M)L(1-s_1+M,{\frak A})$, if
\begin{align}\label{region7}
\sigma_1<M+1-\frac{\kappa+1}{2},\quad \sigma_2>1-M.
\end{align}
Also, similarly to the argument around \eqref{I-31-3bis},
we see that
$$
\left(1\mp \frac{i\tau\eta}{2\pi l}\right)^{s_1-M-1}=O(1)
$$
if $\sigma_1<M+1$.   Therefore the integral on the
right-hand side of \eqref{rho-2} is
$$
\ll_{s_1,s_2,M}\int_0^{\infty}e^{-\eta}
   \left|\left(\frac{\mp i\eta}{2\pi}\right)^{s_2+M-1}\right|d\eta
$$
which is convergent if $\sigma_2>-M$.
Hence the above change is verified, and
we can now conclude that the second sum
on the right-hand side of \eqref{asymp5} is convergent in the region
\eqref{region7}.   This implies that \eqref{asymp5} gives the meromorphic
continuation of $F_{\pm}(s_1,s_2;{\frak A})$ to the region \eqref{region7}.
Since $M$ is arbitrary, $F_{\pm}(s_1,s_2;{\frak A})$ can be continued
meromorphically to the whole space $\mathbb{C}^2$.
This completes the proof of Theorem \ref{thm-1}.

\begin{rmk}\label{rmk-symm}
In \cite{KMT-Debrecen} it was pointed out that Theorem \ref{thm-1} in the special
case ${\frak A}={\frak A}_1$ gives a symmetric form of the functional equation
on some hyperplanes
(\cite[Theorem 2.2]{KMT-Debrecen}).    It is desirable to deduce such a symmetric
form of the functional equation for general ${\frak A}$.
However there is a difficulty,
caused by the fact that $F_{\pm}^0$ on the left-hand side of \eqref{F-0-F} is
different from $F_{\pm}$ on the right-hand side.
\end{rmk}

\section{\bf{Modularity comes into play}}\label{sec-5}

Now we proceed to the proof of Theorem \ref{thm-2}.   Therefore, hereafter, we
assume that $\kappa$ is an even positive integer and $f(\tau)$ is a cusp form of
weight $\kappa$ with respect to $\Gamma_0(N)$.

In Section \ref{sec-3}, we only used the estimates \eqref{I-1-2} to show that
the double integral \eqref{I-3-def} is convergent in the region \eqref{region5}.
However, since now we assume that $f(\tau)$ is a cusp form, we see that $f(iy)$
is also of exponential decay when $y\to 0 $.   Therefore $I_3$ is convergent for
any $s_2\in\mathbb{C}$, and so we can remove the second condition
$\sigma_2>(\kappa+1)/2$ from \eqref{region5}, and also from \eqref{region6}.
This implies that, if $f$ is a cusp form, the whole argument in
Section \ref{sec-3} is valid in the region
\begin{equation}\label{region6bis}
\sigma_1<0
\end{equation}
instead of \eqref{region6}.   (This is important, because the former region
\eqref{region6} has no intersection with \eqref{region10} below.)

We begin with \eqref{I-31-def}, in the region \eqref{region6bis}.
Using \eqref{f-tilde}, we can rewrite \eqref{I-31-def} as
\begin{align}\label{I-31-mod}
I_{31}&=\sum_{m\geq 1}e^{\pi i(s_1-s_2-1)/2}m^{s_1+s_2-1}\int_0^{i\infty}
   (-\sqrt{N}mz)^{-\kappa}\widetilde{f}\left(-\frac{1}{Nmz}\right)z^{s_2-1}
(z+1)^{s_1-1}dz\\
&=N^{-\kappa/2}\sum_{m\geq 1}e^{\pi i(s_1-s_2-1)/2}m^{s_1+s_2-1-\kappa}
   \int_0^{i\infty}\widetilde{f}\left(-\frac{1}{Nmz}\right)z^{-\kappa+s_2-1}
   (z+1)^{s_1-1}dz,\notag
\end{align}
because $(-mz)^{-\kappa}=(mz)^{-\kappa}$ since $\kappa$ is even.
Putting $z=1/w$, we see that the above integral is
\begin{align*}
&=\int_{-i\infty}^0 \widetilde{f}\left(-\frac{w}{Nm}\right)w^{\kappa-s_2+1}
    \left(\frac{1}{w}+1\right)^{s_1-1}\left(-\frac{dw}{w^2}\right)\\
&=\int_0^{-i\infty}\sum_{n\geq 1}\widetilde{a}(n)e^{2\pi in(-w/Nm)}w^{\kappa-s_1-s_2}
     (w+1)^{s_1-1}dw,
\end{align*}
so, if we can change the integration and summation, we have
\begin{align}\label{I-31-mod2}
I_{31}&=N^{-\kappa/2}e^{\pi i(s_1-s_2-1)/2}\sum_{m,n\geq 1}m^{s_1+s_2-1-\kappa}
      \widetilde{a}(n)\\
  &\quad\times\int_0^{-i\infty}e^{-2\pi i(n/Nm)w}w^{\kappa-s_1-s_2}
  (w+1)^{s_1-1}dw.\notag
\end{align}
If
\begin{align}\label{region8}
\sigma_1+\sigma_2<\kappa+1
\end{align}
holds, then the last integral is expressed by the confluent hypergeometric
function (with $\phi=-\pi/2$ here); that is,
\begin{align}\label{I-31-mod3}
I_{31}&=N^{-\kappa/2}e^{\pi i(s_1-s_2-1)/2}\Gamma(\kappa-s_1-s_2+1)\sum_{m,n\geq 1}
      m^{s_1+s_2-1-\kappa}\widetilde{a}(n)\\
     &\quad\times
     \Psi(\kappa-s_1-s_2+1,\kappa-s_2+1;2\pi in/Nm)\notag\\
   &=N^{-\kappa/2}e^{\pi i(s_1-s_2-1)/2}\Gamma(\kappa-s_1-s_2+1)
      H_{2,N}^+(-s_1,\kappa-s_2+1;\widetilde{f}).\notag
\end{align}

We have to check the convergence of \eqref{I-31-mod2} in order to verify the
above interchange of integration and summation.   Put $2\pi i(n/Nm)w=y$.
Then the integral on the right-hand side of \eqref{I-31-mod2} is
\begin{align*}
&=\int_0^{\infty}e^{-y}\left(\frac{Nmy}{2\pi in}\right)^{\kappa-s_1-s_2}
   \left(\frac{Nmy}{2\pi in}+1\right)^{s_1-1}\frac{Nm}{2\pi in}dy\\
&=\left(\frac{Nm}{2\pi in}\right)^{\kappa-s_1-s_2+1}\int_0^{\infty}e^{-y}
   y^{\kappa-s_1-s_2}\left(\frac{Nmy}{2\pi in}+1\right)^{s_1-1}dy.
\end{align*}
Under \eqref{region6bis} we can show
$$
   \left(\frac{Nmy}{2\pi in}+1\right)^{s_1-1}=O(1)
$$
similarly to the argument around \eqref{I-31-3bis}.
But this is not sufficient to prove the convergence of the series
with respect to $m$.   We should be more careful here:
Using
$$
\left(\frac{Nmy}{2\pi in}+1\right)^{s_1-1}=\left(\frac{Nmy+2\pi in}{2\pi in}\right)
   ^{s_1-1},
$$
we obtain
\begin{align}\label{I-31-mod4}
I_{31}&=N^{-\kappa/2}e^{\pi i(s_1-s_2-1)/2}\sum_{m,n\geq 1}
      m^{s_1+s_2-1-\kappa}\widetilde{a}(n)\\
&\;\times (Nm)^{\kappa-s_1-s_2+1}(2\pi in)^{-\kappa+s_2}\notag\\
&\;\times\int_0^{\infty}e^{-y}y^{\kappa-s_1-s_2}(Nmy+2\pi in)^{s_1-1}dy.\notag
\end{align}
Denote the last integral by $J_{31}$.   Then
\begin{align}\label{I-31-mod5}
I_{31}\ll_N \sum_{m,n\geq 1}|\widetilde{a}(n)|
       n^{-\kappa+\sigma_2}|J_{31}|
\end{align}
(where $\ll_N$ means that the implied constant depends on $N$), and
\begin{align*}
J_{31}&=\int_0^{n/Nm}+\int_{n/Nm}^{\infty}\\
   &\ll\int_0^{n/Nm}e^{-y}y^{\kappa-\sigma_1-\sigma_2}n^{\sigma_1-1}dy
    +\int_{n/Nm}^{\infty}e^{-y}y^{\kappa-\sigma_1-\sigma_2}(Nmy)^{\sigma_1-1}dy\\
   &\ll n^{\sigma_1-1}\int_0^{n/Nm}e^{-y}y^{\kappa-\sigma_1-\sigma_2}dy
    +(Nm)^{\sigma_1-1}\int_{n/Nm}^{\infty}e^{-y}y^{\kappa-\sigma_2-1}dy\\
   &=n^{\sigma_1-1}J_{311}+(Nm)^{\sigma_1-1}J_{312},
\end{align*}
say.   Note that $J_{311}$ is convergent in the region \eqref{region8}, and
$J_{312}$ is always convergent.
As for $J_{312}$, we just use the following simple estimate:
\begin{align}\label{J-312}
J_{312}\leq \int_0^{\infty}e^{-y}y^{\kappa-\sigma_2-1}dy,
\end{align}
where the integral on the right-hand side is convergent (hence $O(1)$) if
\begin{align}\label{region9}
\sigma_2<\kappa
\end{align}
holds.   Consider $J_{311}$.   When $Nm\geq n$, we see that
\begin{align}\label{J-311-1}
J_{311}\leq \int_0^{n/Nm}y^{\kappa-\sigma_1-\sigma_2}dy
     \ll \left(\frac{n}{Nm}\right)^{\kappa-\sigma_1-\sigma_2+1}
\end{align}
(under \eqref{region8}), while when $Nm<n$ we have
\begin{align}\label{J-311-2}
J_{311}\leq \int_0^{\infty}e^{-y}y^{\kappa-\sigma_1-\sigma_2}dy\ll 1
\end{align}
(under \eqref{region8}).   Therefore, under the conditions \eqref{region8} and
\eqref{region9}, we have
\begin{align}\label{J-31}
J_{31}\ll\left\{
  \begin{array}{lll}
   n^{\sigma_1-1}\left(\frac{n}{Nm}\right)^{\kappa-\sigma_1-\sigma_2+1}
   +(Nm)^{\sigma_1-1}  &  {\rm if}  &  Nm\geq n,\\
   n^{\sigma_1-1}+(Nm)^{\sigma_1-1} &  {\rm if}  &  Nm<n.
  \end{array}\right.
\end{align}
Substituting this into \eqref{I-31-mod5}, we obtain
\begin{align}\label{I-31-mod6}
I_{31}&\ll_N
\sum_{Nm\geq n}|\widetilde{a}(n)|n^{-\kappa+\sigma_2}
          \left((Nm)^{\sigma_1+\sigma_2-\kappa-1}n^{\kappa-\sigma_2}
         +(Nm)^{\sigma_1-1}\right)\\
&\;+\sum_{Nm<n}|\widetilde{a}(n)|n^{-\kappa+\sigma_2}
       \left(n^{\sigma_1-1}+(Nm)^{\sigma_1-1}\right)\notag\\
&\;=I_{31}^*+I_{31}^{**},\notag
\end{align}
say.
Using Deligne's estimate, we find that
\begin{align}\label{mmmmm}
I_{31}^*\ll \sum_{m\geq 1}(Nm)^{\sigma_1+\sigma_2-\kappa-1}\sum_{n\leq Nm}
      n^{(\kappa-1)/2+\varepsilon}
    +\sum_{m\geq 1}(Nm)^{\sigma_1-1}\sum_{n\leq Nm}
      n^{(\kappa-1)/2+\varepsilon-\kappa+\sigma_2}.
\end{align}
The first double sum is
\begin{align*}
\ll \sum_{m\geq 1}(Nm)^{\sigma_1+\sigma_2-\kappa-1+(\kappa+1)/2
      +\varepsilon}
\ll_N \sum_{m\geq 1}m^{\sigma_1+\sigma_2-(\kappa+1)/2+\varepsilon},
\end{align*}
which is convergent if
\begin{align}\label{region9bis}
\sigma_1+\sigma_2<\frac{\kappa-1}{2}.
\end{align}
The inner sum of the second double sum of \eqref{mmmmm} is
$O((Nm)^{\sigma_2-(\kappa-1)/2+\varepsilon})$ if $\sigma_2\geq (\kappa-1)/2$,
and $O(1)$ otherwise.    Therefore the second double sum is convergent if
$\sigma_2\geq (\kappa-1)/2$ and \eqref{region9bis} holds, or if
$\sigma_2< (\kappa-1)/2$ and $\sigma_1<0$.
Consequently we find that the right-hand side of \eqref{mmmmm} is convergent
in the region
\begin{align}\label{region9bb}
\sigma_1<0,\quad \sigma_1+\sigma_2<\frac{\kappa-1}{2}.
\end{align}

Similarly we find that
\begin{align}\label{nnnnn}
I_{31}^{**}&\ll \sum_{n\geq 1}n^{(\kappa-1)/2+\varepsilon-\kappa+\sigma_2+\sigma_1-1}
      \sum_{Nm< n}1+\sum_{n\geq 1}n^{(\kappa-1)/2+\varepsilon-\kappa+\sigma_2}
      \sum_{Nm<n}(Nm)^{\sigma_1-1}
\end{align}
whose first double sum is
$$
\ll \sum_{n\geq 1}n^{\sigma_1+\sigma_2-(\kappa+1)/2+\varepsilon}
$$
which is convergent in the region \eqref{region9bis}.
The second double sum converges in the same region if $\sigma_1\geq 0$, while if
$\sigma_1<0$ it is convergent when $\sigma_2<(\kappa-1)/2$.
Therefore the right-hand side of \eqref{nnnnn} is convergent in the region
\begin{align}\label{region9bbb}
\sigma_2<\frac{\kappa-1}{2},\quad \sigma_1+\sigma_2<\frac{\kappa-1}{2}.
\end{align}

Therefore by \eqref{region8}, \eqref{region9}, \eqref{region9bb} and
\eqref{region9bbb}, we now
arrive at the conclusion that the right-hand side of \eqref{I-31-mod2} is
absolutely convergent in the region
\begin{align}\label{region10}
\sigma_1<0,\quad \sigma_2<\frac{\kappa-1}{2}.
\end{align}

\bigskip

Next we consider $I_{32}$.   Using the modularity again, similarly to
\eqref{I-31-mod}, we have
\begin{align}\label{I-32-mod}
I_{32}&=\sum_{m\geq 1}e^{\pi i(3s_1+s_2-3)/2}m^{s_1+s_2-1}\int_0^{-i\infty}
   (\sqrt{N}mz)^{-\kappa}\widetilde{f}\left(\frac{1}{Nmz}\right)z^{s_2-1}
   (z+1)^{s_1-1}dz\\
&=N^{-\kappa/2}\sum_{m\geq 1}e^{\pi i(3s_1+s_2-3)/2}m^{s_1+s_2-1-\kappa}\int_0^{-i\infty}
   \widetilde{f}\left(\frac{1}{Nmz}\right)z^{-\kappa+s_2-1}(z+1)^{s_1-1}dz.\notag
\end{align}
Putting $z=1/w$, similarly to \eqref{I-31-mod2}, we obtain
\begin{align}\label{I-32-mod2}
I_{32}&=N^{-\kappa/2}e^{\pi i(3s_1+s_2-3)/2}\sum_{m,n\geq 1}m^{s_1+s_2-1-\kappa}
   \widetilde{a}(n)\\
  &\;\times\int_0^{i\infty}e^{2\pi i(n/Nm)w}w^{\kappa-s_1-s_2}
  (w+1)^{s_1-1}dw\notag\\
&=N^{-\kappa/2}e^{\pi i(3s_1+s_2-3)/2}\Gamma(\kappa-s_1-s_2+1)\sum_{m,n\geq 1}
     m^{s_1+s_2-1-\kappa}\widetilde{a}(n)\notag\\
  &\;\times\Psi(\kappa-s_1-s_2+1,\kappa-s_2+1;-2\pi in/Nm)\notag\\
&=N^{-\kappa/2}e^{\pi i(3s_1+s_2-3)/2}\Gamma(\kappa-s_1-s_2+1)
    H_{2,N}^-(-s_1,\kappa-s_2+1;\widetilde{f}).\notag
\end{align}
The convergence can be discussed exactly the same way as in the case of $I_{31}$.
(This time we start with $-2\pi i(n/m)w=y$.)   Hence \eqref{I-32-mod2} is also
valid in the region \eqref{region10}.

Substituting \eqref{I-31-mod3} and \eqref{I-32-mod2} into \eqref{I-3-2},
and combining with \eqref{contour2}, we have
\begin{align}\label{J-H}
J_2(s_1,s_2;f)&=-\frac{(2\pi)^{s_1+s_2-1}\Gamma(1-s_1)}{e^{\pi is_1}\Gamma(s_2)}
    \Gamma(\kappa-s_1-s_2+1)\\
&\qquad\times N^{-\kappa/2}
 \left\{e^{\pi i(s_1-s_2-1)/2}H_{2,N}^+(-s_1,\kappa-s_2+1;\widetilde{f})\right.\notag\\
&\;\qquad \left.+e^{\pi i(3s_1+s_2-3)/2}H_{2,N}^-(-s_1,\kappa-s_2+1;\widetilde{f})
       \right\}\notag\\
&=\frac{(2\pi)^{s_1+s_2-1}\Gamma(1-s_1)}{\Gamma(s_2)}
    \Gamma(\kappa-s_1-s_2+1)\notag\\
&\qquad\times N^{-\kappa/2}\left\{e^{\pi i(1-s_1-s_2)/2}
  H_{2,N}^+(-s_1,\kappa-s_2+1;\widetilde{f})\right.\notag\\
&\;\qquad \left.+e^{\pi i(s_1+s_2-1)/2}
  H_{2,N}^-(-s_1,\kappa-s_2+1;\widetilde{f})\right\}.\notag
\end{align}
This with \eqref{basic} gives \eqref{thm2-formula} in the region \eqref{region10}.
Therefore, to complete the proof of Theorem \ref{thm-2}, the remaining task is
to show the meromorphic continuation of $H_{2,N}^{\pm}$.


\section{\bf{The meromorphic continuation of $H_{2,N}^{\pm}(s_1,s_2;\widetilde{f})$}}
\label{sec-6}

In this section we prove that $H_{2,N}^{\pm}(s_1,s_2;\widetilde{f})$ can
be continued meromorphically to the whole space $\mathbb{C}^2$.   We first consider
the case $H_{2,N}^+$.    Applying \eqref{hyp-def} (with $\phi=-\pi/2$) to the
right-hand side of \eqref{H-2-def} and putting $y=-i\eta$, we obtain
\begin{align}\label{H-7-1}
H_{2,N}^+(s_1,s_2;\widetilde{f})&=\frac{-i}{\Gamma(s_1+s_2)}\sum_{m,n\geq 1}
   m^{-s_1-s_2}\widetilde{a}(n)\int_0^{\infty}e^{-2\pi(n/Nm)\eta}\\
&\times(-i\eta)^{s_1+s_2-1}(-i\eta+1)^{-s_1-1}d\eta.\notag
\end{align}

The argument in the preceding section shows that the double series form of
$H_{2,N}^+(-s_1,\kappa-s_2+1;\widetilde{f})$
(that is, \eqref{I-31-mod3})
is absolutely convergent in the region
\eqref{region10}.   This implies that the right-hand side of \eqref{H-7-1} is
absolutely convergent in the region
\begin{align}\label{region11}
\sigma_1>0,\quad \sigma_2>\frac{\kappa+3}{2}.
\end{align}
Therefore, if we assume \eqref{region11}, we may change the order of summation and
integration on the right-hand side of \eqref{H-7-1} to obtain
\begin{align}\label{H-7-2}
&H_{2,N}^+(s_1,s_2;\widetilde{f})\\
&=\frac{-i}{\Gamma(s_1+s_2)}\int_0^{\infty}
   \sum_{m\geq 1}m^{-s_1-s_2}\widetilde{f}\left(\frac{i\eta}{Nm}\right)
   (-i\eta)^{s_1+s_2-1}(-i\eta+1)^{-s_1-1}d\eta\notag\\
&=\frac{-i}{\Gamma(s_1+s_2)}\int_0^{\infty}\widetilde{\mathcal{F}}(i\eta,s_1+s_2)
   (-i\eta)^{s_1+s_2-1}(-i\eta+1)^{-s_1-1}d\eta,\notag
\end{align}
where
\begin{align}\label{cal-F-def}
\widetilde{\mathcal{F}}(\tau,s)=\sum_{m\geq 1}\widetilde{f}
   \left(\frac{\tau}{Nm}\right)m^{-s}.
\end{align}

\begin{lem}\label{mellin}
Let $u$ be a complex variable.   We have
\begin{align}\label{mellin-formula}
\int_0^{\infty}\widetilde{\mathcal{F}}(i\eta,s)\eta^{u-1}d\eta=\Gamma(u)
\left(\frac{N}{2\pi}\right)^u\zeta(s-u)L(u,\widetilde{f})
\end{align}
in the region
\begin{align}\label{region12}
\Re(s)-1>\Re(u)>\frac{\kappa+1}{2}.
\end{align}\end{lem}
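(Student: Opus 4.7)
The plan is to compute the Mellin transform directly by expanding $\widetilde{f}$ into its Fourier series and interchanging summation and integration. First, substituting the Fourier expansion $\widetilde{f}(\tau)=\sum_{n\geq 1}\widetilde{a}(n)e^{2\pi in\tau}$ into the definition \eqref{cal-F-def}, we get
\begin{align*}
\widetilde{\mathcal{F}}(i\eta,s)=\sum_{m,n\geq 1}\widetilde{a}(n)m^{-s}e^{-2\pi n\eta/(Nm)}.
\end{align*}
Multiplying by $\eta^{u-1}$ and (tentatively) swapping the order of integration and the two summations, the inner integral is a standard gamma integral
\begin{align*}
\int_0^{\infty}e^{-2\pi n\eta/(Nm)}\eta^{u-1}d\eta=\Gamma(u)\left(\frac{Nm}{2\pi n}\right)^u,
\end{align*}
and pulling out the constants leaves
\begin{align*}
\Gamma(u)\left(\frac{N}{2\pi}\right)^u\sum_{m\geq 1}m^{u-s}\sum_{n\geq 1}\widetilde{a}(n)n^{-u}=\Gamma(u)\left(\frac{N}{2\pi}\right)^u\zeta(s-u)L(u,\widetilde{f}),
\end{align*}
which is the desired identity.

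The only substantive issue is justifying the interchange of the double summation with the integration. For this I would verify absolute convergence of the iterated integral. Using Deligne's estimate $\widetilde{a}(n)\ll n^{(\kappa-1)/2+\varepsilon}$ together with the positivity of the exponential factor, the absolute value of the integrand sums to
\begin{align*}
\sum_{m,n\geq 1}n^{(\kappa-1)/2+\varepsilon}m^{-\Re(s)}\int_0^\infty e^{-2\pi n\eta/(Nm)}\eta^{\Re(u)-1}d\eta,
\end{align*}
and the inner gamma-type integral converges for $\Re(u)>0$, yielding (up to constants) the double series $\sum_{m,n}m^{\Re(u)-\Re(s)}n^{(\kappa-1)/2+\varepsilon-\Re(u)}$. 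The sum over $n$ converges when $\Re(u)>(\kappa+1)/2$, and the sum over $m$ converges when $\Re(s)-\Re(u)>1$; these are precisely the two inequalities in \eqref{region12}. By Fubini/Tonelli this legitimizes the interchange, and the computation above gives \eqref{mellin-formula} in that region.

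The main obstacle is really only bookkeeping: one must be careful that $\Re(u)>(\kappa+1)/2$ is needed to absorb the Deligne bound on $\widetilde{a}(n)$, while $\Re(s-u)>1$ is needed for the Riemann zeta factor, so the stated strip is optimal for this direct approach. No deeper input (e.g.\ a functional equation or contour shift) is required for the lemma itself; the meromorphic continuation beyond \eqref{region12} will then be obtained in the subsequent argument by deforming the contour in \eqref{H-7-2} and exploiting the cuspidal decay of $\widetilde{\mathcal{F}}$.
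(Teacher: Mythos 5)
Your proof is correct and follows essentially the same route as the paper: expand $\widetilde{\mathcal{F}}$ via the Fourier series of $\widetilde{f}$, interchange summation and integration, evaluate the gamma integral, and factor the resulting double sum into $\zeta(s-u)L(u,\widetilde{f})$. The paper merely asserts the absolute convergence under \eqref{region12} that you verify explicitly with Deligne's estimate, so your write-up is if anything slightly more detailed.
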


\begin{proof}
We have
\begin{align*}
&\int_0^{\infty}\widetilde{\mathcal{F}}(i\eta,s)\eta^{u-1}d\eta\\
&=\sum_{m,n\geq 1}m^{-s}\widetilde{a}(n)\int_0^{\infty}e^{-2\pi(n/Nm)\eta}
    \eta^{u-1}d\eta\\
&=\sum_{m,n\geq 1}m^{-s}\widetilde{a}(n)\Gamma(u)\left(\frac{Nm}{2\pi n}\right)^u\\
&=\Gamma(u)\left(\frac{N}{2\pi}\right)^u\sum_{m\geq 1}m^{-s+u}\sum_{n\geq 1}
    \widetilde{a}(n)n^{-u}\\
&=\Gamma(u)\left(\frac{N}{2\pi}\right)^u\zeta(s-u)L(u,\widetilde{f}),
\end{align*}
where in the above calculations, changes of summation and integration can
be verified because of absolute convergence under condition \eqref{region12}.
\end{proof}

Therefore, $\widetilde{\mathcal{F}}(i\eta,s)$ is the inverse Mellin transform
of the right-hand side of \eqref{mellin-formula}, and hence
\begin{align}\label{F-mod1}
\widetilde{\mathcal{F}}(i\eta,s_1+s_2)=\frac{1}{2\pi i}\int_{(c)}
   \eta^{-u}\Gamma(u)\left(\frac{N}{2\pi}\right)^u\zeta(s_1+s_2-u)L(u,\widetilde{f})du,
\end{align}
where $c=\Re(u)$ satisfies
\begin{align}\label{condition-c}
\sigma_1+\sigma_2-1>c>\frac{\kappa+1}{2}
\end{align}
and the path of integration is the vertical line from $c-i\infty$ to $c+i\infty$.

From \eqref{H-7-2} and \eqref{F-mod1} we obtain
\begin{align}\label{H-7-3}
&H_{2,N}^+(s_1,s_2;\widetilde{f})
=\frac{-1}{2\pi\Gamma(s_1+s_2)}\int_0^{\infty}\int_{(c)}
   \eta^{-u}\Gamma(u)\left(\frac{N}{2\pi}\right)^u\zeta(s_1+s_2-u)L(u,\widetilde{f})du\\
&\qquad\qquad\times (-i\eta)^{s_1+s_2-1}(-i\eta+1)^{-s_1-1}d\eta\notag\\
&\qquad=\frac{-1}{2\pi\Gamma(s_1+s_2)}\int_{(c)}
   \Gamma(u)\left(\frac{N}{2\pi}\right)^u\zeta(s_1+s_2-u)L(u,\widetilde{f})J(u)du,\notag
\end{align}
where
\begin{align}\label{J-def}
J(u)=\int_0^{\infty}\eta^{-u}(-i\eta)^{s_1+s_2-1}(-i\eta+1)^{-s_1-1}d\eta,
\end{align}
if the change of the order of integration is possible.
The integral \eqref{J-def} is absolutely convergent in the region
\begin{align}\label{region13}
\sigma_2<c+1,\quad \sigma_1+\sigma_2>c,
\end{align}
and hence the above change of integration is valid by Fubini's theorem in this
region.
From \eqref{condition-c} and \eqref{region13} we see that \eqref{H-7-3} is valid
in the region
\begin{align}\label{region13b}
\sigma_2<c+1,\quad \sigma_1+\sigma_2-1>c>\frac{\kappa+1}{2}.
\end{align}
Since the intersection of \eqref{region11} and \eqref{region13b} is non-empty,
now we find that $H_{2,N}^+(s_1,s_2;\widetilde{f})$ is continued to the region
\eqref{region13b} by the expression \eqref{H-7-3}.

Putting $y=-i\eta$ on the right-hand side of \eqref{J-def}, and rotating the path of
integration to the positive real axis (this is possible under condition
\eqref{region13}), we obtain
$$
J(u)=e^{\pi i(1-u)/2}\int_0^{\infty}y^{s_1+s_2-1-u}(1+y)^{-s_1-1}dy.
$$
Therefore, applying the beta integral formula we obtain
\begin{align}\label{J-mod1}
J(u)=e^{\pi i(1-u)/2}\frac{\Gamma(u-s_2+1)\Gamma(s_1+s_2-u)}{\Gamma(s_1+1)}.
\end{align}
Substituting this into \eqref{H-7-3}, we now arrive at the expression
\begin{align}\label{H-7-4}
H_{2,N}^+(s_1,s_2;\widetilde{f})=&\frac{-1}
{2\pi \Gamma(s_1+s_2)\Gamma(s_1+1)}
\int_{(c)}\Gamma(u)\Gamma(u-s_2+1)\Gamma(s_1+s_2-u)\\
&\times e^{\pi i(1-u)/2}\left(\frac{N}{2\pi}\right)^u
\zeta(s_1+s_2-u)L(u,\widetilde{f})du\notag
\end{align}
in the region \eqref{region13}.

We prove that the right-hand side of \eqref{H-7-4} can be continued meromorphically
to the whole space $\mathbb{C}^2$ by suitable modifications of the path of
integration.   Let $(s_1^0,s_2^0)$ be any point in the space $\mathbb{C}^2$.
We choose a point $(s_1^*,s_2^*)$ in the region \eqref{region13}, which satisfies
$\Im s_1^*=\Im s_1^0$, $\Im s_2^*=\Im s_2^0$.
Then \eqref{H-7-4} holds for $(s_1,s_2)=(s_1^*,s_2^*)$.   The poles of the integrand
on the right-hand side of \eqref{H-7-4} are

(A) $u=0,-1,-2,-3,\ldots$,

(B) $u=s_2^*-1,s_2^*-2,s_2^*-3,\ldots$,

(C) $u=s_1^*+s_2^*,s_1^*+s_2^*+1,s_1^*+s_2^*+2,\ldots$,

(D) $u=s_1^*+s_2^*-1$.

\noindent
The poles (A) and (B) are on the left of the vertical line $\Re u=c$,
while the poles (C) and (D) are on the right of $\Re u=c$.

First consider the case when $\Im(s_1^*+s_2^*)\neq\Im s_2^*$ and
$\Im(s_1^*+s_2^*)\neq 0$.
Let $L_1$ be the line segment joining $s_2^*-1$ and $s_2^0-1$, and $L_2$ the line
segment joining $s_1^*+s_2^*-1$ and $s_1^0+s_2^0-1$.   We deform the original
path $\Re u=c$ to make a new path $\mathcal{D}$, such that $L_1$ is on the left of
$\mathcal{D}$ while $L_2$ is on the right of $\mathcal{D}$ (see Fig.1).

\begin{center}
\begin{psfrags}
\psfrag{a}{$s_2^*-1$}
\psfrag{b}{$s_2^0-1$}
\psfrag{c}{$s_1^0+s_2^0-1$}
\psfrag{d}{$s_1^*+s_2^*-1$}
\psfrag{0}{$0$}
\psfrag{-1}{$-1$}
\psfrag{-2}{$-2$}
\psfrag{z}{$c$}
\includegraphics[scale=1]{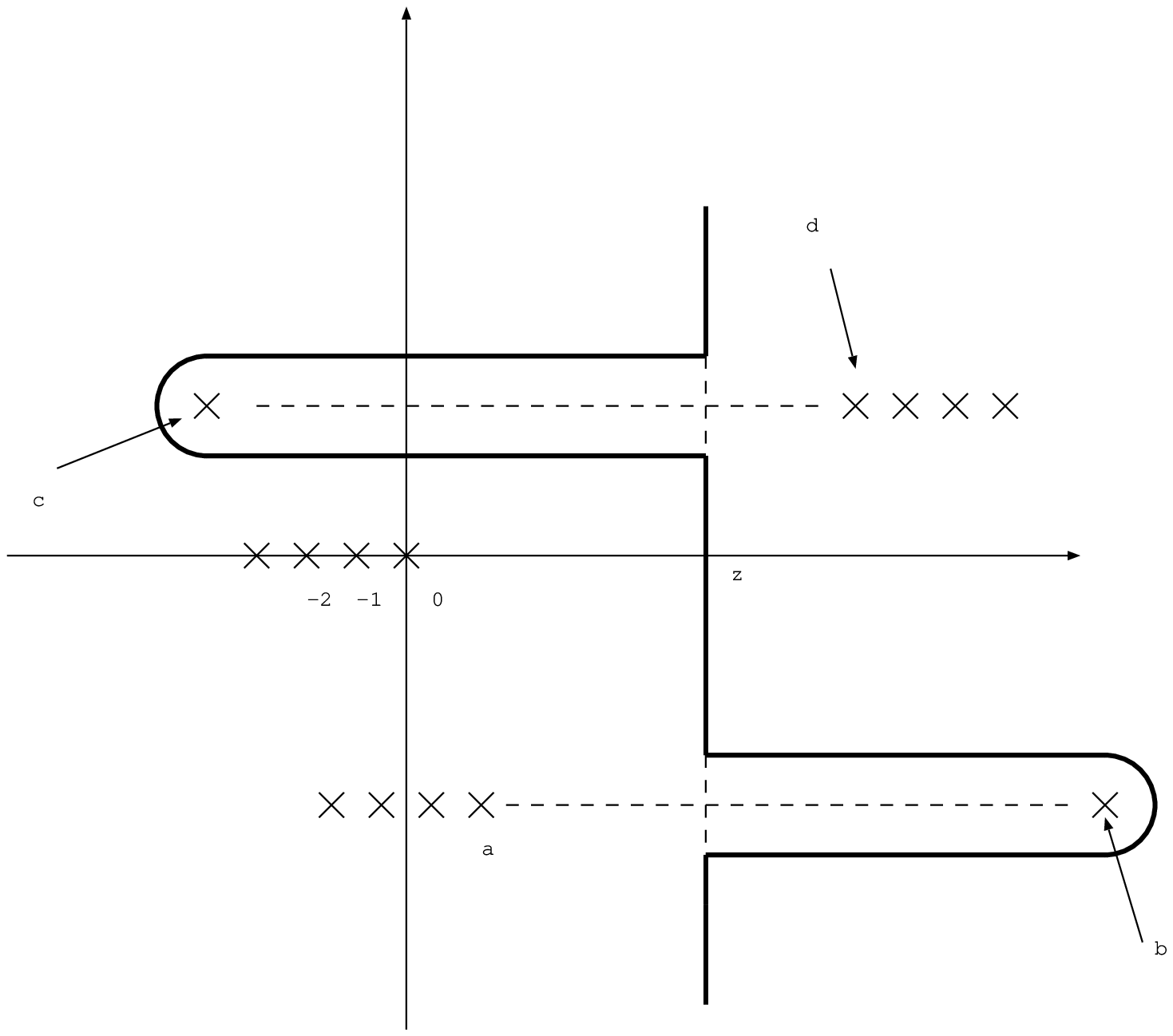}
\end{psfrags}

Fig.1
\end{center}

Then we move the variables $(s_1,s_2)$ from $(s_1^*,s_2^*)$ to $(s_1^0,s_2^0)$,
keeping the values of their imaginary parts.   Then the location of poles moves,
but during this procedure they do not encounter the new path $\mathcal{D}$.
Therefore in this case we can continue $H_{2,N}^+(s_1,s_2;\widetilde{f})$ to
the point $(s_1^0,s_2^0)$ holomorphically.

Next consider the situation when $\Im(s_1^*+s_2^*)=\Im s_2^*$ or
$\Im(s_1^*+s_2^*)= 0$.   We discuss the former case, because the latter case
can be treated similarly.   When $\Im(s_1^*+s_2^*)=\Im s_2^*$, we deform
$\Re u=c$ to make $\mathcal{D}'$, which only requires that $L_1$ is on the left of
$\mathcal{D}'$.   Then, when $s_1^*+s_2^*-1$ is moved to $s_1^0+s_2^0-1$, several
poles encounter $\mathcal{D}'$.   Therefore at the point $(s_1^0,s_2^0)$, we have to
add the residue terms coming from the above poles to the right-hand side of
\eqref{H-7-4}.   This new expression of $H_{2,N}^+(s_1,s_2;\widetilde{f})$ gives
the meromorphic continuation to the point $(s_1^0,s_2^0)$.   We thereby obtain the
proof of meromorphic continuation of $H_{2,N}^+(s_1,s_2;\widetilde{f})$ to the
whole space $\mathbb{C}^2$.

The function $H_{2,N}^-(s_1,s_2;\widetilde{f})$ can be treated quite similarly.
We can show the integral expression of $H_{2,N}^-(s_1,s_2;\widetilde{f})$, almost
the same as \eqref{H-7-4}, only the factor $e^{\pi i(1-u)/2}$ is replaced by
$e^{\pi i(u-1)/2}$, which can be continued meromorphically as above.
The proof of Theorem \ref{thm-2} is now complete.

\begin{rmk}\label{rmk-sing}
We can discuss the location of singularities of
$H_{2,N}^{\pm}(s_1,s_2;\widetilde{f})$ more closely, by using the more
sophisticated path (like the path $\mathcal{C}'$ defined in \cite{Mat06},
described in Fig.2 of \cite{Mat06}) instead of $\mathcal{D}'$.
\end{rmk}

\section{\bf{Proof of Corollary \ref{cor-2}}}\label{sec-7}

We conclude this paper with the proof of Corollary \ref{cor-2}.

The right-hand side of the formula \eqref{thm2-formula}
given in Theorem \ref{thm-2} 
consists of two terms, the term $L_1(s_1,s_2;f)$ and the other.

First consider the term $L_1(s_1,s_2;f)$.
The denominator $\Gamma(s_2)$ on the right-hand side of the
definition \eqref{def-G} of $L_1(s_1,s_2;f)$ has poles at
$s_2=-l$, and the other factors do not cancel those poles.
Therefore $s_2=-l$ are zero-divisors of $L_1(s_1,s_2;f)$. 

Consider the other term on the right-hand side of 
\eqref{thm2-formula}.   Again the denominator is $\Gamma(s_2)$,
so what we have to show is that the other factors do not cancel
the poles $s_2=-l$ of $\Gamma(s_2)$.    This is obvious
except for the terms
$H_{2,N}^{\pm}(-s_1,\kappa-s_2+1;\widetilde{f})$.
As for $H_{2,N}^{\pm}$, in Section \ref{sec-6} we noticed that
these are absolutely convergent, hence especially finite, in the region
\eqref{region10}.    Since the region \eqref{region10} includes
$s_2=-l$ when $\sigma_1<0$, we now arrive at the conclusion that
$s_2=-l$ are zero-divisors of the second term on the right-hand side
of \eqref{thm2-formula}.



\end{document}